 \newtheorem{theorem}{Theorem}[section]
\newtheorem{proposition}{Proposition}[section]
\newtheorem{corollary}{Corollary}[section]
\newtheorem{lemma}{Lemma}[section]
 \newtheorem{remark}{Remarks}[section]
\def\eps{\varepsilon}
\begin{document}
\title{Liouville-type theorems and bounds of \\ solutions for Hardy-H\'enon elliptic systems} 
\author{Quoc Hung PHAN} 

\address{Universit\'e Paris 13, CNRS UMR 7539,
Laboratoire Analyse, G\'eom\'etrie et Applications 93430 Villetaneuse, France}

\email{phanqh@math.univ-paris13.fr}

\keywords{Hardy-H\'enon system; Nonexistence; Liouville-type theorem}
\subjclass{primary 35J60, 35B33; secondary 35B45}
\begin{abstract}
We consider the Hardy-H\'enon system
$-\Delta u =|x|^a v^p$, $-\Delta v =|x|^b u^q$  with $p,q>0$ and $a,b\in {\mathbb R}$
and we are concerned in particular with the Liouville property,
i.e. the nonexistence of positive solutions in the whole space ${\mathbb R}^N$.
In view of known results, it is a natural conjecture that this property should be true if and only if $(N+a)/(p+1)+$ $(N+b)/(q+1)>N-2$. 
In this paper, we prove the conjecture for dimension $N=3$ in the case of bounded solutions
and in dimensions $N\le 4$ when $a,b\le 0$, among other partial nonexistence results. 
As far as we know, this is the first optimal Liouville type result for the Hardy-H\'enon system.
Next, as applications, we give results on singularity and decay estimates as well as a priori bounds of positive solutions.
\end{abstract}
\maketitle
\section{Introduction}
We study the semilinear elliptic systems  of Hardy-H\'enon type
\begin{align}\label{1}
\begin{cases}
-\Delta u=|x|^av^p,\quad x\in \Omega,\\
-\Delta v=|x|^bu^q,\quad x\in \Omega,
\end{cases}
\end{align}
where $p,q >0$,  $a,b\in {\mathbb R}$ and $\Omega$ is a domain of ${\mathbb R}^N$, $N\geq 3$.

Throughout this paper, unless otherwise specified, solutions are considered in the class
\begin{equation}\label{solclass}
C^2(\Omega\setminus\{0\})\cap C(\Omega).
\end{equation}

Let us first note that, if $\min\{a,b\}\leq -2$, 
then (\ref{1}) has no positive solution in class (\ref{solclass}) in any domain $\Omega$ containing the origin \cite[Proposition 2.1]{BV99}. We therefore restrict ourselves to the case $\min\{a,b\}> -2$.

We are interested in the Liouville type theorem- i.e. the nonexistence of positive solution in the entire space $\Omega={\mathbb R}^N$- and its applications such as a priori bounds 
 and singularity and decay estimates of solutions. 

We recall the case $a=b=0$ of (\ref{1}), the so-called Lane-Emden system, which  has been widely studied by many authors. Here, the Lane-Emden conjecture states that there is no positive classical solution in $\Omega={\mathbb R}^N$ if and only if 
\begin{align}\label{hyper}
\frac{N}{p+1}+\frac{N}{q+1}>N-2.
\end{align}
This conjecture is known to be true for radial solutions in all dimensions \cite{Mit96}. For non-radial solutions, in dimension $N\leq 2$, the conjecture is a consequence of a result of Mitidieri and Pohozaev \cite{MP01}. In dimension $N=3$, it was proved by Serrin and Zou \cite{SZ96} under the additional assumption that $(u,v)$ has at most polynomial growth at $\infty$. This assumption was then removed by Polacik, Quittner and Souplet \cite{PQS07} and hence the conjecture is true for $N=3$. Recently, the conjecture was proved for $N=4$ by Souplet \cite{Sou09}, and some  partial results were also established for $N\geq 5$ (see \cite{Sou09, BM02, Lin98, CL09}).

For the general system with $a\ne 0$ or $b\ne 0$, the Liouville property is less understood. In fact, the nonexistence of {\bf supersolution} has been studied in \cite{AS11, MP01}. 
The following result is essentially known.

\medskip


\par\noindent {\bf Theorem A.} 
{\it Let $a,b>-2$ and $N\geq 3$. If $pq\leq 1$, or if $pq>1$ and 
\begin{equation}\label{condSupersol}
\max\biggl\{\frac{2(p+1)+a+bp}{pq-1}, \frac{2(q+1)+b+aq}{pq-1} \biggr\}\geq N-2,
\end{equation}
 then system (\ref{1}) has no positive supersolution in $\Omega={\mathbb R}^N$.
}

\medskip

Moreover, it is not difficult to check that condition (\ref{condSupersol}) is optimal for supersolutions 
(consider functions of the form $u(x)=(c_1+c_2|x|^2)^{-\gamma_1}$, $v(x)=(c_3+c_4|x|^2)^{-\gamma_2}$).
Miditieri and Pohozaev 
proved Theorem A for $p, q\geq 1$ by rescaled test-function method (see \cite[Section~18]{MP01}).
Theorem~A for all $p,q>0$ can be proved by an 
argument totally similar to that of Serrin and Zou in \cite{SZ96}.
There,  the authors treated  
the special case $a=b=0$, but this argument still works for the general case $a,b>-2$. 
However, 
their proof is rather involved, especially for  $p<1$ or $q<1$. 
Very recently, Amstrong and Sirakov \cite{AS11} developed 
a new maximum principle type argument which,
among other things, allows for a simpler proof of Theorem~A for all $p,q>0$. 
It follows from the arguments in \cite[Section~6]{AS11}. 
Also, Theorem~A remains true if $\Omega$ is an exterior domain.
\smallskip

As usual, it is expected that the optimal range of nonexistence for solutions
should be larger than for supersolutions. However, this question seems still difficult, even in the 
special case $a=b=0$.
Furthermore, 
even for the scalar equation $-\Delta u=|x|^au^p$, the optimal condition for 
nonexistence of positive solution on the whole of ${\mathbb R}^N$
has not been completely settled yet when $a>0$ 
(see the recent paper \cite{PhS} and cf.~\cite{GS81a, BVG10}).
Concerning system (\ref{1}), the following optimal result regarding radial solutions 
is known \cite{BVG10}.

\medskip
\par\noindent {\bf Proposition B.}
{\it Let $a,b>-2$. Then system (\ref{1}) has no positive radial solution in $\Omega={\mathbb R}^N$ if and only if 
\begin{align}\label{1a}
\frac{N+a}{p+1}+\frac{N+b}{q+1}>N-2.
\end{align}
}

The hyperbola
\begin{align}\label{2a}
\frac{N+a}{p+1}+\frac{N+b}{q+1}=N-2.
\end{align}
thus plays a critical role in the radial case and this, combined with the case of Lane-Emden system, leads to the following conjecture.

\medskip
\par\noindent {\bf Conjecture C.}
{\it Let $a,b>-2$. Then system (\ref{1}) has no positive solution in $\Omega={\mathbb R}^N$ if and only if 
$(p,q)$ satisfies (\ref{1a}).
}
\medskip

In this paper, we prove the conjecture for dimension $N=3$ in the class of bounded solutions, 
and for dimensions $N\le 4$ when $a,b\le 0$, without any growth assumption
(among other partial results in higher dimensions).

\medskip

{\bf From now on, we restrict ourselves to the case $pq>1$ (cf. Theorem A), and without loss of generality, it will be assumed that}
$$p\geq q.$$
Let us denote 
 \begin{align}
\alpha =\frac{2(p+1)}{pq-1},\qquad 
 \beta=\frac{2(q+1)}{pq-1}.
\end{align}
Then (\ref{1a}) is equivalent to
\begin{align}\label{la1}
\alpha \Bigl(1+\frac{b}{2}\Bigr) + \beta\Bigl(1+\frac{a}{2}\Bigr)>N-2. 
\end{align}

 We have obtained the following Liouville type results.

\begin{theorem}\label{cond}
Let $a,b >-2$ and $N\geq 3$. Assume $pq>1$ and (\ref{1a}). 
If $N\geq 4$,  assume in addition that  
 \begin{align}
&0\leq a-b \leq (N-2)(p-q),\label{40}\\
&\alpha>N-3.\label{50}
\end{align}
Then system (\ref{1}) has no positive bounded solution in $\Omega={\mathbb R}^N$.
\end{theorem}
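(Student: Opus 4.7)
The plan is to adapt to the Hardy--H\'enon setting the combination of integral (energy) estimates, Pohozaev-type identities, and a feedback/bootstrap argument developed by Serrin--Zou \cite{SZ96} and Souplet \cite{Sou09} for the Lane--Emden system. Concretely, for each $R>0$ one tests the equations in (\ref{1}) against suitable cut-off functions and against $u,v$ themselves, obtaining a first batch of weighted integral identities on $B_R$ of the form
\begin{equation*}
\int_{B_R}|x|^a v^{p+1}\,dx \;=\; \int_{B_R}\nabla u\cdot\nabla v\,dx + \mathcal{O}(\text{boundary terms on }\partial B_R),
\end{equation*}
and similarly with the roles of the two equations interchanged. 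These control the two natural energies $\int|x|^a v^{p+1}$ and $\int|x|^b u^{q+1}$ in terms of the cross-energy $\int\nabla u\cdot\nabla v$ and sphere integrals at radius $R$.

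Next I would derive a Rellich--Pohozaev identity on $B_R$ by multiplying the first equation of (\ref{1}) by $x\cdot\nabla v$ (and the second by $x\cdot\nabla u$) and integrating by parts. Combining with the previous identities, the volume contribution collapses to
\begin{equation*}
\Bigl(\tfrac{N+a}{p+1}+\tfrac{N+b}{q+1}-(N-2)\Bigr)\int_{B_R}|x|^a v^{p+1}\,dx,
\end{equation*}
whose coefficient is strictly positive by the subcriticality hypothesis (\ref{1a}). Hence the theorem will follow as soon as one can produce a sequence $R_k\to\infty$ along which every boundary integral on $\partial B_{R_k}$ is $o(1)$: indeed this forces $v\equiv 0$ and then $u\equiv 0$, contradicting positivity.

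To control the boundary terms I would use a dyadic averaging trick: integrate the boundary quantities in $R$ over $[R,2R]$ and invoke the mean-value theorem to select a good radius, then upgrade the $L^\infty$ bound on $(u,v)$ (this is where boundedness enters decisively) to $C^1$ bounds on the chosen sphere via interior elliptic regularity. For $N=3$ the dimensional count works out directly and no extra hypothesis is needed. For $N\ge 4$, however, the estimate just barely fails at one scale and the procedure must be iterated; condition (\ref{40}) is precisely what is required so that the natural Hardy--H\'enon scalings $u_\lambda(x)=\lambda^{\alpha}u(\lambda x)$, $v_\lambda(x)=\lambda^{\beta}v(\lambda x)$ respect the coupling of the two equations (this explains the factor $(N-2)(p-q)$), while (\ref{50}), i.e.\ $\alpha>N-3$, supplies exactly the exponent margin that lets the iteration converge.

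The principal obstacle, as in the case $a=b=0$ treated in \cite{Sou09}, will be closing this feedback loop in dimension $N=4$: the weights $|x|^a$, $|x|^b$ shift every scaling exponent, and since solutions are only assumed continuous at the origin one must carefully localize away from $\{0\}$ while tracking the loss of regularity introduced by the weights. Balancing this loss against the $L^\infty$ gain---simultaneously in both equations, whose bootstraps are coupled---is precisely the role played by (\ref{40})--(\ref{50}).
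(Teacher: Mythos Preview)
Your outline captures the general architecture (Pohozaev identity, boundary-term control on spheres, feedback), but it has one genuine gap and one real misidentification.

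\textbf{The missing comparison step.} Your claim that the volume contribution in the Pohozaev identity collapses to a single term with coefficient $\tfrac{N+a}{p+1}+\tfrac{N+b}{q+1}-(N-2)$ is not how the argument runs, and more importantly it hides the real difficulty. The identity (Lemma~\ref{pohozaev}) produces on the \emph{boundary} side a term $R^{1+a}\int_{|x|=R} v^{p+1}\,d\sigma_R$, and for $N\ge 4$ this is the term you cannot control: since $p$ may be arbitrarily large (only $\alpha>N-3$ is assumed), no Sobolev inequality on $S^{N-1}$ with source norm $\|D_\theta^2 v\|_k$ for a fixed $k$ will reach $\|v\|_{p+1}$. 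The paper's solution is a pointwise comparison property
\[
|x|^a\,\frac{v^{p+1}}{p+1}\ \le\ |x|^b\,\frac{u^{q+1}}{q+1}
\]
(Lemma~\ref{comparison}), proved by a maximum-principle argument on $w=v-\ell|x|^{-h}u^\sigma$ with $\sigma=\tfrac{q+1}{p+1}$ and $h=\tfrac{a-b}{p+1}$. This absorbs the bad $v^{p+1}$ boundary term into the $u^{q+1}$ one, and it is precisely here---not in any scaling argument---that hypothesis (\ref{40}) enters: the inequalities $0\le a-b\le (N-2)(p-q)$ are exactly what make the auxiliary quantity $K=h(N-2-\tfrac{h}{1-\sigma})|x|^{-2}+\dots$ nonnegative, so that $\Delta w\ge 0$ on $\{w\ge 0\}$. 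Your proposal never mentions this comparison, and your explanation of (\ref{40}) as a scaling-compatibility condition is incorrect.

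\textbf{The feedback is multiplicative, not additive.} For $N\ge 4$ the boundary terms do not simply tend to zero along a good sequence of radii. Instead, after the sphere--Sobolev and interpolation work (Steps 2--4 of the paper), one arrives at an inequality of the form $F(R)\le CR^{-\tilde a}\,F(4R)^{\tilde b}$ with $F(R)=\int_{B_R}|x|^b u^{q+1}$ appearing on both sides; boundedness of $u$ is used to find $R_i\to\infty$ with $F(4R_i)\le MF(R_i)$, and condition (\ref{50}) is what guarantees $\tilde b<1$ (and $\tilde a>0$), so that the resulting bound $F(R_i)\le CR_i^{-\tilde a/(1-\tilde b)}$ forces $F\equiv 0$. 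Your description (``boundary integrals are $o(1)$ along $R_k$'') would suffice for $N=3$, where indeed the Sobolev embedding $W^{2,1+\varepsilon}(S^2)\hookrightarrow L^\infty(S^2)$ closes the estimate directly without feedback, but it is not the mechanism for $N\ge 4$.
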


\begin{theorem}\label{cond2}
Let $a,b >-2$ and $N\geq 3$. 
Assume $pq>1$, $p\ge q$, (\ref{hyper}), (\ref{1a}) and (\ref{50}). 
 Then system (\ref{1}) has no positive solution in $\Omega={\mathbb R}^N$.
\end{theorem}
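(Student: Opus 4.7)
The plan is to reduce the general case to the bounded case covered by Theorem~\ref{cond}, via the doubling and rescaling technique of \cite{PQS07}, supplemented by the bounded-case Lane--Emden Liouville theorem supplied by (\ref{hyper}). Proceeding by contradiction, let $(u,v)$ be a positive solution of (\ref{1}) on $\mathbb{R}^N$. If $(u,v)$ is bounded, Theorem~\ref{cond} already yields the contradiction (for $N=3$ immediately, and for $N\geq 4$ after first verifying that the missing hypothesis (\ref{40}) can be recovered from $p\geq q$ together with (\ref{hyper}), (\ref{50}), or bypassed by a reflection argument). Hence we may assume $(u,v)$ is unbounded.

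With $\alpha,\beta$ as in the paper, introduce the size function $M(x):=u(x)^{1/\alpha}+v(x)^{1/\beta}$ and apply the doubling lemma of \cite{PQS07} to produce $x_k\in\mathbb{R}^N$ with $M(x_k)\to\infty$ and $M\leq 2M(x_k)$ on $B\bigl(x_k,k/M(x_k)\bigr)$. Set $\lambda_k:=1/M(x_k)\to 0$. Using the identities $\alpha+2=p\beta$ and $\beta+2=q\alpha$, the rescalings
$$\tilde u_k(y):=\lambda_k^{\alpha}u(x_k+\lambda_ky),\qquad \tilde v_k(y):=\lambda_k^{\beta}v(x_k+\lambda_ky)$$
are uniformly bounded on compact sets, normalised by $\tilde u_k(0)^{1/\alpha}+\tilde v_k(0)^{1/\beta}=1$, and solve
$$-\Delta\tilde u_k=|x_k+\lambda_ky|^{a}\tilde v_k^{p},\qquad -\Delta\tilde v_k=|x_k+\lambda_ky|^{b}\tilde u_k^{q}.$$
Standard interior elliptic estimates extract a locally $C^2$ limit $(\tilde u,\tilde v)$ along a subsequence.

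The limiting problem depends on the behaviour of $|x_k|/\lambda_k$. If $|x_k|/\lambda_k\to\infty$, an additional normalisation absorbing the asymptotic constants $|x_k|^{a},|x_k|^{b}$ (legitimate because $a,b>-2$) turns the limit into a bounded positive entire solution of the \emph{unweighted} Lane--Emden system on $\mathbb{R}^N$; under (\ref{hyper}) this is excluded by the bounded-case Lane--Emden Liouville theorem. If instead $|x_k|/\lambda_k$ stays bounded, forcing $x_k\to 0$, one performs a Hardy--Hénon-adapted rescaling $\hat u_k(y)=\lambda_k^{\tilde\alpha}u(\lambda_k y)$, $\hat v_k(y)=\lambda_k^{\tilde\beta}v(\lambda_k y)$ with $\tilde\alpha,\tilde\beta$ as in (\ref{condSupersol}), yielding a bounded positive entire solution of (\ref{1}) itself — excluded by Theorem~\ref{cond}.

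\textbf{Main obstacle.} The hardest step is the first case: arranging the auxiliary normalisation so that the weights in the rescaled equations converge to positive constants while the nondegeneracy $\tilde u_k(0)^{1/\alpha}+\tilde v_k(0)^{1/\beta}=1$ passes to the limit. This is precisely where (\ref{50}), $\alpha>N-3$, plays its role: it controls the blow-up scale against the spatial dimension and places the Lane--Emden exponents in the range where the bounded-case Liouville theorem triggered by (\ref{hyper}) is applicable. A subtler point is matching the Lane--Emden and Hardy--Hénon rescalings in the intermediate regime $|x_k|/\lambda_k=O(1)$; here one must show that the limit still falls under Theorem~\ref{cond} and, in particular, that the hypothesis (\ref{40}) absent from Theorem~\ref{cond2} can be dispensed with on the limit problem, using the normalisation at $x_k\to 0$ and the ordering $p\ge q$.
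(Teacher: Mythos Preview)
Your approach differs substantively from the paper's, and it contains a genuine gap.

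\textbf{The gap.} Your argument hinges on invoking Theorem~\ref{cond} to dispose of bounded solutions (and of Hardy--H\'enon limits in the rescaling). But for $N\ge 4$, Theorem~\ref{cond} requires hypothesis~(\ref{40}), namely $0\le a-b\le (N-2)(p-q)$, which is \emph{not} among the assumptions of Theorem~\ref{cond2} and cannot be recovered from them. A concrete counterexample: $N=4$, $p=q=2$, $a=-1$, $b=0$ satisfies (\ref{hyper}), (\ref{1a}), (\ref{50}) and $a,b>-2$, yet $a-b=-1<0$ violates~(\ref{40}). Your parenthetical remark that (\ref{40}) ``can be recovered \dots\ or bypassed by a reflection argument'' is therefore unsupported. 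There is also a circularity issue: the paper's proof of Theorem~\ref{cond} for $N\ge 4$ explicitly appeals to Theorem~\ref{cond2} in the subrange $p<(N+2)/(N-2)$ (see Step~1 there), so you cannot use Theorem~\ref{cond} as a black box to establish Theorem~\ref{cond2}.

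\textbf{What the paper does instead.} The paper avoids any bounded-case Hardy--H\'enon Liouville theorem altogether. It first proves, via doubling and the Lane--Emden Liouville theorem of \cite{Sou09} (this is where (\ref{hyper}) and (\ref{50}) enter), the \emph{universal} pointwise decay estimates of Theorem~\ref{th3} and the gradient estimates of Proposition~\ref{th3B}: $u(x)\le C|x|^{-\alpha-\frac{a+bp}{pq-1}}$, $v(x)\le C|x|^{-\beta-\frac{b+aq}{pq-1}}$, and analogous bounds for $|\nabla u|,|\nabla v|$. These are then substituted directly into the boundary terms of the Rellich--Pohozaev identity (Lemma~\ref{pohozaev}), yielding $H(R)\le CR^{N-2-(1+b/2)\alpha-(1+a/2)\beta}\to 0$ as $R\to\infty$ by~(\ref{1a}), hence $u\equiv v\equiv 0$. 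The Pohozaev identity, not a second Liouville theorem, closes the argument; this is the idea you are missing.
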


As an immediate consequence, we obtain Conjecture~C in the following special cases.

\begin{corollary}
\smallskip

(i) If $N=3$, then Conjecture~C is true for bounded solutions.~\footnote{After the completion of the present work, we received a preprint by M.Fazly and N.Ghoussoub
where they obtain Theorem \ref{cond} in the special case $N=3$ with $a,b\ge 0$.
They also prove interesting result about {\it solutions with finite Morse index } in the scalar case.}
\smallskip

(ii) If $N=3$ or $4$ and $a, b\le 0$, then Conjecture~C is true.

\end{corollary}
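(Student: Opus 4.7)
The plan is to derive both parts of the corollary from the preceding theorems together with Proposition B, so almost all the work consists in verifying that the hypotheses of Theorem \ref{cond} or Theorem \ref{cond2} are automatically satisfied in the specific settings of the corollary. First I would dispose of the ``if'' direction of Conjecture C (existence of a positive solution when (\ref{1a}) fails): this is already provided by Proposition B in the equality/opposite inequality case, since radial solutions are a fortiori solutions. The rest of the proof is devoted to the ``only if'' direction, i.e.\ proving nonexistence assuming (\ref{1a}).

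For the nonexistence direction I would split into $pq\le 1$ and $pq>1$. When $pq\le 1$, Theorem A (with the convention $a,b>-2$) gives nonexistence even of supersolutions, so there is nothing to do. Assume therefore $pq>1$ and (without loss of generality) $p\ge q$. For part (i), where $N=3$ and solutions are bounded, Theorem \ref{cond} applies directly: the only additional hypotheses listed in that theorem, namely (\ref{40}) and (\ref{50}), are required only when $N\ge 4$, so for $N=3$ the inclusion in the hypotheses of Theorem \ref{cond} is immediate.

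For part (ii), I would appeal to Theorem \ref{cond2}, whose conclusion does not require boundedness. The remaining hypotheses to check are (\ref{hyper}) and (\ref{50}). Condition (\ref{hyper}) is immediate from (\ref{1a}) when $a,b\le 0$, since
\[
\frac{N}{p+1}+\frac{N}{q+1}\ \ge\ \frac{N+a}{p+1}+\frac{N+b}{q+1}\ >\ N-2.
\]
Condition (\ref{50}) is $\alpha>N-3$, which is automatic for $N=3$ since $\alpha>0$. The only nontrivial case is therefore $N=4$ where I must show $\alpha>1$, that is, $pq<2p+3$. Using (\ref{1a}) at $N=4$ with $a,b\le 0$ and clearing denominators, I would rewrite $4/(p+1)+4/(q+1)>2$ as $pq<p+q+3$, and then apply the standing assumption $p\ge q$ to conclude $pq<p+q+3\le 2p+3$, so that $\alpha>1=N-3$ as desired.

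The main obstacle is really no obstacle at all — it is just this last chain of inequalities showing that $\alpha>N-3$ is automatic when $N\le 4$, $a,b\le 0$ and $p\ge q$. Everything else is bookkeeping: checking which of the theorems' structural hypotheses (dimension, sign of $a-b$, critical hyperbola condition) is vacuous in the specified low-dimensional / nonpositive-weight setting. Once these verifications are in place, the two parts of the corollary are immediate applications of Theorems \ref{cond} and \ref{cond2}, respectively.
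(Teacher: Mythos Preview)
Your proposal is correct and follows exactly the strategy the paper has in mind: the corollary is stated there as an ``immediate consequence'' of Theorems~\ref{cond} and~\ref{cond2}, and you have supplied precisely the routine verifications the paper omits (Proposition~B for the existence direction, Theorem~A for $pq\le 1$, and the check that $a,b\le 0$ together with (\ref{1a}) forces (\ref{hyper}) and, for $N=4$, the inequality $\alpha>1$). Your derivation of $\alpha>1$ via $pq<p+q+3\le 2p+3$ is clean and matches the spirit of Remark~1.1(c), which records the same implication in slightly different form.
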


 \begin{remark}

(a) The proof of Theorem \ref{cond} 
uses the technique introduced by Serrin and Zou in \cite{SZ96} and further developed by Souplet in \cite{Sou09}, 
which is based on a combination of Rellich-Pohozaev identity, a comparison property between components via the maximum principle, Sobolev and interpolation inequality on $S^{N-1}$ and feedback and measure arguments.
As for the idea of the proof of Theorem \ref{cond2}, see after Theorem~\ref{th3} below.

\smallskip 

(b) Theorem \ref{cond} is still true for polynomially bounded solutions, i.e. if $u(x)\leq C|x|^q$ for $x$ large,
with some $q>0$. This follows from easy modifications of the proof.
Let us recall that Liouville type theorems for bounded solutions are usually sufficient for applications such as a priori estimates
and universal bounds, obtained by rescaling arguments (see \cite{GS81b, PQS07}).

\smallskip

(c) If $a+b\leq 2(4-N)/(N-3)$ then condition (\ref{50}) is a consequence of (\ref{1a}). Then it follows from Theorem~\ref{cond2} that Conjecture~C is true if $a,b\leq 0$ and $a+b\leq 2(4-N)/(N-3)$. 

\end{remark}

\vskip 0.2cm

We next study the strongly related question of singularity and decay estimates for solutions of system (\ref{1}). We have the following theorem.

\begin{theorem}\label{th3} Let $a,b >-2$ and $N\geq 3$. 
Assume $pq>1$, $p\ge q$, 
(\ref{hyper}) and (\ref{50}). Then there exists a constant $C=C(N,p,q,a,b)>0$ such that the following holds.
\smallskip 

(i) Any positive solution of system~(\ref{1}) in $\Omega=\{x\in{\mathbb R}^N;\, 0<|x|<\rho\}$ ($\rho>0$) satisfies
\begin{align}
u(x)\leq C|x|^{-\alpha -\frac{a+bp}{pq-1}}, 
\quad
v(x)\leq C|x|^{-\beta -\frac{b+aq}{pq-1}}   ,
\quad 0<|x|<\rho/2.\label{sing1}
\end{align}

(ii) Any positive solution of system~(\ref{1}) in $\Omega=\{x\in{\mathbb R}^N;\, |x|>\rho\}$ ($\rho\ge 0$) satisfies
\begin{align}
u(x)\leq C|x|^{-\alpha -\frac{a+bp}{pq-1}},
\quad v(x)\leq C|x|^{-\beta -\frac{b+aq}{pq-1}}
,
\quad  |x|>2\rho.\label{sing3}
\end{align}
\end{theorem}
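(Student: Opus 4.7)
The natural approach is a doubling--rescaling argument of Polacik--Quittner--Souplet type, combined with the Emden--Fowler change of variables which absorbs the weights $|x|^{a},|x|^{b}$. The limiting equation will be the pure Lane--Emden system, for which the Liouville theorem under hypothesis (\ref{hyper}) (together with (\ref{50}) in the higher-dimensional cases) provides the contradiction.

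Writing $\alpha_1=\alpha+(a+bp)/(pq-1)$ and $\beta_1=\beta+(b+aq)/(pq-1)$ for the exponents in (\ref{sing1})--(\ref{sing3}), I would first introduce the Emden--Fowler variables
\[
W(t,\omega)=|x|^{\alpha_1}u(x), \qquad Z(t,\omega)=|x|^{\beta_1}v(x), \qquad t=\log|x|,\ \omega=x/|x|,
\]
on the cylinder $\mathcal{C}=\mathbb{R}\times S^{N-1}$. A direct computation converts (\ref{1}) into an autonomous elliptic system on $\mathcal{C}$ whose drift, zeroth-order and spherical-Laplacian coefficients depend only on $N,\alpha_1,\beta_1,p,q$. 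The bounds (\ref{sing1}) and (\ref{sing3}) become exactly the uniform boundedness of $W,Z$ on the relevant half-cylinder: $\{t<\log(\rho/2)\}$ for part~(i) and $\{t>\log(2\rho)\}$ for part~(ii).

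Suppose by contradiction that $\Phi:=W^{1/\alpha}+Z^{1/\beta}$ is unbounded on the relevant half-cylinder, and pick $P_n=(t_n,\omega_n)$ with $\Phi(P_n)\to\infty$. Since $\Phi$ is continuous and hence bounded on compact sets, along a subsequence $t_n\to-\infty$ (case (i)) or $t_n\to+\infty$ (case (ii)); in particular $d_{\mathcal{C}}(P_n,\partial)\,\Phi(P_n)\to\infty$. The Polacik--Quittner--Souplet doubling lemma applied on $\mathcal{C}$ then produces $P_n^{\ast}\in\mathcal{C}$ with $\Phi(P_n^{\ast})\geq\Phi(P_n)$, $d_{\mathcal{C}}(P_n^{\ast},\partial)\Phi(P_n^{\ast})\to\infty$, and the local control $\Phi\leq 2\Phi(P_n^{\ast})$ on the $d_{\mathcal{C}}$-ball of radius $n/\Phi(P_n^{\ast})$ around $P_n^{\ast}$. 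Setting $\mu_n=1/\Phi(P_n^{\ast})\to 0$ and choosing a normal chart on $\mathcal{C}$ near $P_n^{\ast}$ (so a neighbourhood of $P_n^{\ast}$ is identified with a ball in $\mathbb{R}\times\mathbb{R}^{N-1}\simeq\mathbb{R}^{N}$), I define
\[
\hat{W}_n(y)=\mu_n^{\alpha}\,W(P_n^{\ast}+\mu_n y), \qquad \hat{Z}_n(y)=\mu_n^{\beta}\,Z(P_n^{\ast}+\mu_n y).
\]
The Lane--Emden scaling identities $\alpha+2=p\beta$ and $\beta+2=q\alpha$ ensure that the nonlinearities survive at leading order, while the drift, zeroth-order and spherical-curvature terms of the Emden--Fowler system each pick up an explicit factor of $\mu_n$ or $\mu_n^{2}$. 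Consequently
\[
-\Delta\hat{W}_n=\hat{Z}_n^{p}+O(\mu_n), \qquad -\Delta\hat{Z}_n=\hat{W}_n^{q}+O(\mu_n),
\]
with the normalization $\hat{W}_n(0)^{1/\alpha}+\hat{Z}_n(0)^{1/\beta}=1$ and the local bound $\hat{W}_n^{1/\alpha}+\hat{Z}_n^{1/\beta}\leq 2$ on $\{|y|\leq n\}$.

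Standard elliptic regularity together with a diagonal Arzel\`a--Ascoli extraction then yields a subsequential $C^{2}_{\mathrm{loc}}(\mathbb{R}^{N})$-limit $(\hat{W},\hat{Z})$ of non-negative $C^{2}$ functions solving the pure Lane--Emden system $-\Delta\hat{W}=\hat{Z}^{p}$, $-\Delta\hat{Z}=\hat{W}^{q}$ on $\mathbb{R}^{N}$, with $\hat{W}(0)^{1/\alpha}+\hat{Z}(0)^{1/\beta}=1$. The strong maximum principle upgrades both components to be strictly positive, and the Lane--Emden Liouville theorem of Serrin--Zou \cite{SZ96}, Polacik--Quittner--Souplet \cite{PQS07} and Souplet \cite{Sou09}---valid precisely under (\ref{hyper}), reinforced by (\ref{50}) in the higher-dimensional cases---rules out such a solution, contradiction. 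The main obstacle is the bookkeeping for the rescaled Emden--Fowler system: one has to verify that every term other than $-\Delta$ and the nonlinearity carries a vanishing power of $\mu_n$ (including the $O(\mu_n^{2})$ curvature correction to $\Delta_{\omega}$ in the normal chart), and that the doubling conclusion $d_{\mathcal{C}}(P_n^{\ast},\partial)\Phi(P_n^{\ast})\to\infty$ really forces the rescaled domains to exhaust all of $\mathbb{R}^{N}$ rather than a half-space, which is what permits the full Lane--Emden Liouville theorem to apply.
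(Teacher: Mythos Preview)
Your approach is correct and reaches the same limiting Lane--Emden Liouville theorem, but the paper takes a different route. Rather than passing to Emden--Fowler coordinates on the cylinder, the paper performs, for each fixed $x_0$ in the domain, the spatial rescaling
\[
U(y)=R_0^{\alpha_1}u(x_0+R_0y),\qquad V(y)=R_0^{\beta_1}v(x_0+R_0y),\qquad R_0=|x_0|/2,
\]
(with your notation $\alpha_1,\beta_1$ for the exponents in the statement). This lands on a system $-\Delta U=c(y)V^p$, $-\Delta V=d(y)U^q$ on the Euclidean unit ball, where $c(y)=|y+x_0/R_0|^{a}$ and $d(y)=|y+x_0/R_0|^{b}$ have uniform $C^1$ bounds and are uniformly bounded away from zero because $|y+x_0/R_0|\in[1,3]$. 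A separate universal-bound lemma (Lemma~\ref{lem3}) for systems with bounded H\"older coefficients bounded below by a positive constant---proved by the standard PQS doubling/blow-up in $\mathbb{R}^N$, the coefficients converging to positive constants in the limit---then gives $U(0)+V(0)\le C$, i.e.\ the claimed bound at $x_0$. Your Emden--Fowler route has the advantage of absorbing the weights in one stroke and performing a single blow-up, at the cost of working on a curved product manifold and tracking the $O(\mu_n)$ drift and $O(\mu_n^{2})$ curvature corrections in the normal chart; the paper's route stays entirely Euclidean but factors the argument into two rescalings (the $R_0$-normalization and then the blow-up hidden inside Lemma~\ref{lem3}) and must carry the variable coefficients $c,d$ through the compactness step via Ascoli.
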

 The proof of Theorem~\ref{th3} is based on:  
 
-{\hskip 1mm}a change of variable, that allows to replace the coefficients $|x|^a, |x|^b$ with smooth functions which are bounded and bounded away from $0$ in a suitable spatial domain;

-{\hskip 1mm}a generalization of a doubling-rescaling argument from \cite{PQS07} (see Lemma~\ref{lem3} below);

-{\hskip 1mm}a known Liouville theorem for the Lane-Emden system \cite{Sou09}.

With Theorem~\ref{th3} at hand (along with the corresponding decay estimates for the gradients~--~cf.~Proposition \ref{th3B} below),
one can then deduce Theorem~\ref{cond2} from the Rellich-Pohozaev identity.

\vskip 0.2cm
Finally, as an application of our Liouville theorems, we derive a priori bounds of solutions of  the  following boundary value problem 
\begin{align}\label{bound}
\begin{cases}
-\Delta u=|x|^av^p,\quad x\in \Omega, \\ 
-\Delta v=|x|^bu^q,\quad x\in \Omega, \\ 
(u,v)=(\varphi, \psi),\quad x\in \partial \Omega,
\end{cases}
\end{align}
where $\Omega\subset{\mathbb R}^N$ is a smooth bounded domain containing the origin, $\varphi, \psi \in C(\partial\Omega)$ are nonnegative. For this, we essentially follow the classical blow-up method of Gidas and Spruck~\cite{GS81b}. We have the following.

\begin{theorem}\label{th4}
Let $\varphi, \psi$ be nonegative functions in $C(\partial \Omega)$.
Under the assumptions of Theorem \ref{cond2}, all positive solutions of (\ref{bound}) in $C^2(\Omega\setminus \{0\})\cap C(\overline{\Omega})$  are uniformly bounded. 
\end{theorem}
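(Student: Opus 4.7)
The plan is to argue by contradiction via the classical Gidas–Spruck blow-up method \cite{GS81b}, invoking our Liouville theorems to rule out each possible limiting configuration. Suppose for contradiction that a sequence $(u_k,v_k)$ of positive solutions of (\ref{bound}) satisfies $M_k:=\|u_k\|_\infty+\|v_k\|_\infty\to\infty$. Pick points $x_k\in\overline\Omega$ realizing (approximately) the maxima, and via the doubling lemma of \cite{PQS07} (cf.~Lemma~\ref{lem3} below) applied to the scale-invariant quantity $u_k^{1/\alpha}+v_k^{1/\beta}$, choose scales $\lambda_k\to 0$ so that the rescaled functions
\[
\tilde u_k(y):=\lambda_k^{\alpha}u_k(x_k+\lambda_k y),\qquad \tilde v_k(y):=\lambda_k^{\beta}v_k(x_k+\lambda_k y)
\]
are uniformly bounded on bounded sets, nontrivial at $y=0$, and satisfy $-\Delta\tilde u_k=|x_k+\lambda_k y|^a\tilde v_k^p$, $-\Delta\tilde v_k=|x_k+\lambda_k y|^b\tilde u_k^q$ on the expanding domain $\Omega_k:=(\Omega-x_k)/\lambda_k$, with rescaled boundary values $\lambda_k^\alpha\varphi,\lambda_k^\beta\psi\to 0$. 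After extracting a further subsequence, $x_k\to x^*\in\overline\Omega$ and three cases arise.

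If $x^*\in\Omega\setminus\{0\}$, then $|x_k+\lambda_k y|^a\to|x^*|^a>0$ locally uniformly, $\Omega_k$ exhausts ${\mathbb R}^N$, and standard elliptic regularity produces a positive limit $(\tilde u,\tilde v)$ solving the constant-coefficient system $-\Delta\tilde u=|x^*|^a\tilde v^p$, $-\Delta\tilde v=|x^*|^b\tilde u^q$ on ${\mathbb R}^N$. Absorbing the constants via a rescaling $(\tilde u,\tilde v)\mapsto(c_1\tilde u,c_2\tilde v)$ yields a positive solution of the standard Lane--Emden system on ${\mathbb R}^N$, which is precluded by Theorem~\ref{cond2} specialized to $a=b=0$ (under which the hypotheses reduce to~(\ref{hyper}) and~(\ref{50}), both assumed), a contradiction.

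If $x_k\to 0$, the Lane--Emden rescaling does not stabilize, so I rescale around the origin using the Hardy–Hénon scaling exponents $\alpha'=\alpha+(a+bp)/(pq-1)$ and $\beta'=\beta+(b+aq)/(pq-1)$ from Theorem~\ref{th3}. Setting $\hat\lambda_k:=u_k(x_k)^{-1/\alpha'}\to 0$ and
\[
\hat u_k(y):=\hat\lambda_k^{\alpha'}u_k(\hat\lambda_k y),\qquad \hat v_k(y):=\hat\lambda_k^{\beta'}v_k(\hat\lambda_k y),
\]
the identities $\alpha'+2+a=p\beta'$ and $\beta'+2+b=q\alpha'$ show that $(\hat u_k,\hat v_k)$ satisfies the full Hardy–Hénon system~(\ref{1}) with coefficients $|y|^a,|y|^b$ on $\Omega/\hat\lambda_k$, which exhausts ${\mathbb R}^N$ since $0\in\mathrm{int}(\Omega)$. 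The singularity estimate of Theorem~\ref{th3}(i) supplies the uniform local bounds $\hat u_k(y)\le C|y|^{-\alpha'}$, $\hat v_k(y)\le C|y|^{-\beta'}$, and also shows that the marked points $y_k:=x_k/\hat\lambda_k=x_k\,u_k(x_k)^{1/\alpha'}$ remain bounded, with $\hat u_k(y_k)=1$. Passing to a $C^2_{\mathrm{loc}}$ limit on ${\mathbb R}^N\setminus\{0\}$ yields a nontrivial positive solution of~(\ref{1}) in the class~(\ref{solclass}), directly contradicting Theorem~\ref{cond2}.

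The principal obstacle is the remaining case $x^*\in\partial\Omega$. Since $0\in\Omega$, we have $x^*\ne 0$, so the coefficients still converge to positive constants, but after straightening $\partial\Omega$ near $x^*$ the domains $\Omega_k$ now exhaust a half-space ${\mathbb R}^N_+$, and the limit is a positive solution of the Lane--Emden system on ${\mathbb R}^N_+$ vanishing on $\partial{\mathbb R}^N_+$. A half-space Liouville theorem for Lane--Emden is then required: this can be obtained by applying the moving-planes method from the boundary hyperplane to produce monotonicity of the limit in the inner normal direction, then letting the plane slide to infinity to extract a translation-invariant (in the normal direction) positive limit that restricts to a positive solution of the Lane--Emden system on ${\mathbb R}^{N-1}$, again excluded under~(\ref{hyper}) by a reduction to Theorem~\ref{cond2} in one lower dimension. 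Collecting the three contradictions completes the proof.
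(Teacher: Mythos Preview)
Your blow-up strategy is the same as the paper's (Gidas--Spruck rescaling, with the half-space case handled in the paper by a direct citation of \cite[Theorem~4.2]{PQS07} rather than your moving-planes sketch). The paper also streamlines the case analysis to two cases rather than three, by first invoking Theorem~\ref{th3} to see that solutions are uniformly bounded away from $\{0\}\cup\partial\Omega$, so blow-up can only occur near the origin or the boundary.

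There are two technical slips in your treatment of the case $x_k\to 0$. First, you set $\hat\lambda_k:=u_k(x_k)^{-1/\alpha'}$ and assert $\hat\lambda_k\to 0$, but your doubling step only gives $u_k(x_k)^{1/\alpha}+v_k(x_k)^{1/\beta}\to\infty$, which does not force $u_k(x_k)\to\infty$ (the blow-up could sit entirely in $v_k$). Second, and more seriously, you pass to a limit only on $\mathbb{R}^N\setminus\{0\}$ via the bound $\hat u_k(y)\le C|y|^{-\alpha'}$ and then claim the limit lies in class~(\ref{solclass}); but nothing in your argument controls $\hat u_k$ uniformly near $y=0$, so the limit could a priori carry a genuine singularity at the origin, and Theorem~\ref{cond2} would not apply. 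The paper sidesteps both issues by rescaling around $P_k$ (not the origin) with the normalization $\lambda_k=M_k^{-1}$, where $M_k$ is the supremum of $u_k^{1/\alpha'}+v_k^{1/\beta'}$ over a fixed ball containing $0$: this choice makes the rescaled functions bounded by $1$ everywhere (hence the limit is automatically in class~(\ref{solclass})), and Theorem~\ref{th3} is invoked only to show that $P_k/\lambda_k$ remains bounded, so that the coefficients $|y+P_k/\lambda_k|^a$, $|y+P_k/\lambda_k|^b$ converge and a space shift recovers a bounded entire solution of~(\ref{1}).
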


  The boundary value problem (\ref{bound}) has been investigated, especially for the case $\varphi=\psi=0$, and the existence and non-existence of positive solutions have been established \cite{CR10,FPR08}. More precisely,
 the existence of a positive solution is obtained via   
variational methods and non-existence of nontrivial solutions in starshaped domains is 
 a consequence of a generalized Pohozaev-type identity. Further results on the asymptotic behavior of solutions for H\'enon systems with nearly critical exponent can be found in \cite{HY08}.

 \begin{remark}
The conclusions of Theorem~\ref{th3} remain true under the assumption that system (\ref{1}) does not admit positive bounded solution in $\Omega={\mathbb R}^N$ 
(instead of (\ref{hyper}) and (\ref{50})). 
As for Theorem~\ref{th4}, it remains true under the assumptions that both system (\ref{1}) and system (\ref{1}) with $a=b=0$ do not admit positive bounded solution in $\Omega={\mathbb R}^N$
(instead of (\ref{hyper}), (\ref{1a}) and (\ref{50})). 

\end{remark}

The rest of paper is organized as follows. In Section 2, we recall some functional inequalities, Rellich-Pohozaev identity and prove a comparison property between the two components. Section 3 is devoted to the proof of Theorem \ref{cond}. 
The proof is quite long and involved, and for the sake of clarity, we separate it in two cases: $N\geq4$ and $N=3$. Section~4 is devoted to applications of Liouville property, we establish the singularity and decay estimates as well as a priori bound of solutions. 
The proof of Theorem \ref{cond2} is then given in Section 5. Finally, for completeness, we collect in Appendix the proofs of some results which are more or less known.

\vskip 0.2cm
\section{Preliminaries}
For $R>0$, we set $B_R=\{x\in {\mathbb R}^N;\ |x|<R\}$.
We shall use spherical coordinates $r=|x|$, $\theta=x/|x| \in S^{N-1}$ and write $u=u(r,\theta)$. 
The surface measures on $S^{N-1}$ and on the sphere
$\{x\in {\mathbb R}^N;\ |x|=R\}$, $R>0$, will be denoted respectively by $d\theta$ and by $d\sigma_R$.
For given function $w=w(\theta)$ on $S^{N-1}$ and $1\leq k\leq\infty$, we set
$\|w\|_k=\|w\|_{L^k(S^{N-1})}$. 
When no confusion is likely, we shall denote
$\|u\|_k=\|u(r,\cdot)\|_k$.

\subsection{Some functional inequalities}

\begin{lemma}[Sobolev inequalities on $S^{N-1}$]\label{1b}
Let $N\geq 2, j\geq 1$ is integer and $1<k<\lambda \leq\infty$, $k\ne (N-1)/j$. For $w=w(\theta)\in W^{j,k}(S^{N-1})$, we have
\begin{align*}
\|w\|_\lambda \leq C(\|D^j_\theta w\|_k+\|w\|_1)
\end{align*}
where 
\begin{align*}
\begin{cases}
&\frac{1}{k}-\frac{1}{\lambda}=\frac{j}{N-1}, \text{ if } k<(N-1)/j,\\
&\lambda=\infty \quad\quad\quad\, \text{ if } k>(N-1)/j.
 \end{cases}
\end{align*}
and $C=C(j,k,N)>0$.
 \end{lemma}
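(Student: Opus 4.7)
The statement is a standard Sobolev embedding on the compact manifold $S^{N-1}$, so the strategy is to import the corresponding inequalities from $\mathbb{R}^{N-1}$ via charts and then replace the full $W^{j,k}$-norm on the right-hand side by $\|D^j_\theta w\|_k+\|w\|_1$ using compactness.

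First I would fix a finite smooth atlas $\{(U_i,\varphi_i)\}_{i=1}^m$ covering $S^{N-1}$ together with a smooth partition of unity $\{\chi_i\}$ subordinate to $\{U_i\}$. Writing $w=\sum_i \chi_i w$ and transferring each piece to $\mathbb{R}^{N-1}$ through $\varphi_i$, the statement reduces to inequalities for compactly supported functions on $\mathbb{R}^{N-1}$. In the subcritical regime $k<(N-1)/j$, I would apply the classical Gagliardo–Nirenberg–Sobolev inequality iteratively ($j$ times, each step raising the exponent according to $1/k-1/\lambda=j/(N-1)$) to obtain
\begin{equation*}
\|\chi_i w\|_\lambda \le C\sum_{|\alpha|\le j}\|D^\alpha(\chi_i w)\|_k.
\end{equation*}
In the supercritical regime $k>(N-1)/j$, I would appeal to Morrey's embedding $W^{j,k}(\mathbb{R}^{N-1})\hookrightarrow L^\infty$ at the same stage. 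Expanding $D^\alpha(\chi_i w)$ by Leibniz, summing over $i$, and observing that the change of variables $\varphi_i^{-1}$ has bounded Jacobian on a compact manifold, one arrives at the intermediate inequality
\begin{equation*}
\|w\|_\lambda \le C\bigl(\|D^j_\theta w\|_k+\|w\|_{W^{j-1,k}(S^{N-1})}\bigr).
\end{equation*}

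The main remaining point, and the only place where the term $\|w\|_1$ enters, is to show
\begin{equation*}
\|w\|_{W^{j-1,k}(S^{N-1})} \le C\bigl(\|D^j_\theta w\|_k+\|w\|_1\bigr).
\end{equation*}
This I would prove by a standard contradiction argument: if the inequality fails, pick $w_n$ with $\|w_n\|_{W^{j-1,k}}=1$ and $\|D^j_\theta w_n\|_k+\|w_n\|_1\to 0$. Since $S^{N-1}$ is compact, Rellich–Kondrachov gives a subsequence converging strongly in $W^{j-1,k}$ to some $w_\infty$ with $\|w_\infty\|_{W^{j-1,k}}=1$. The vanishing of $\|D^j_\theta w_n\|_k$ forces $D^j_\theta w_\infty=0$ in the distributional sense, so $w_\infty$ is a polynomial of degree $\le j-1$ in each chart; since it is a globally defined smooth function on $S^{N-1}$ annihilated by all $j$-th derivatives, an elementary argument (or elliptic regularity for the Laplace–Beltrami iterate) shows $w_\infty$ is constant. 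Then $\|w_\infty\|_1=\lim\|w_n\|_1=0$ forces $w_\infty\equiv 0$, contradicting $\|w_\infty\|_{W^{j-1,k}}=1$.

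Combining the two estimates delivers the claimed inequality with the constant $C=C(j,k,N)$. The main obstacle is the compactness/contradiction step above, specifically justifying that a function on $S^{N-1}$ whose $j$-th covariant (or Euclidean, in charts) derivatives vanish must be constant; the cleanest way to handle this is to observe that vanishing of all order-$j$ chart derivatives implies vanishing of all order-$j$ tangential derivatives, hence in particular of $\nabla w_\infty$, which on a connected manifold yields constancy. Everything else is bookkeeping with the partition of unity and the Leibniz rule.
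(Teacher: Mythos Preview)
The paper does not actually prove this lemma; it simply refers the reader to \cite{SZ96}. Your proposal supplies the standard self-contained argument and is essentially correct: localize with a partition of unity, apply the Euclidean Sobolev (resp.\ Morrey) inequality in each chart to get $\|w\|_\lambda \le C\|w\|_{W^{j,k}(S^{N-1})}$, and then upgrade the right-hand side to $\|D^j_\theta w\|_k+\|w\|_1$ by the Rellich--Kondrachov compactness-and-contradiction device. So compared with the paper you are giving strictly more, at the cost of some routine bookkeeping.

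The one step that deserves a sharper justification is the claim that $D^j_\theta w_\infty=0$ forces $w_\infty$ to be constant. Your sentence ``hence in particular of $\nabla w_\infty$'' is not literally valid: vanishing of all order-$j$ derivatives does not by itself kill the gradient. A clean way to close this is to restrict to great circles. For any unit-speed geodesic $\gamma$ on $S^{N-1}$ one has $\tfrac{d^j}{dt^j}\,w_\infty(\gamma(t))=(\nabla^j w_\infty)(\dot\gamma,\ldots,\dot\gamma)=0$, so $t\mapsto w_\infty(\gamma(t))$ is a polynomial of degree at most $j-1$ in $t$; since great circles are $2\pi$-periodic, this polynomial must be constant, and since any two points of $S^{N-1}$ lie on a common great circle, $w_\infty$ is constant. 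With this small fix your proof goes through.
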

See e.g \cite{SZ96}.
\begin{lemma}[Elliptic $L^p$- estimates on an annulus]\label{lem2a}\label{lemell}
Let $1<k<\infty$. For $R>0$ and $z=z(x)\in W^{2,1}(B_{2R}\setminus B_{R/4})$, we have
\begin{align*}
\int\limits_{B_R\setminus B_{R/2}}|D^2_xz|^kdx\leq C\bigg(\int\limits_{B_{2R}\setminus B_{R/4}}|\Delta z|^kdx+R^{-2k} \int\limits_{B_{2R}\setminus B_{R/4}}|z|^kdx\bigg),
\end{align*}
with $C=C(n,k)>0$.
\end{lemma}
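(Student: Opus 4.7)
The plan is to reduce the estimate to the standard interior Calder\'on–Zygmund $L^k$ estimate for the Laplacian, together with a covering argument on the annulus and a scaling in $R$. The key fact I would assume from classical elliptic regularity is: for any $z\in W^{2,k}_{\mathrm{loc}}$ and any ball $B_{2\rho}(x_0)$, one has
\begin{equation*}
\|D^2 z\|_{L^k(B_\rho(x_0))} \leq C\bigl(\|\Delta z\|_{L^k(B_{2\rho}(x_0))} + \rho^{-2}\|z\|_{L^k(B_{2\rho}(x_0))}\bigr),
\end{equation*}
with $C=C(N,k)$. This is the standard interior $L^k$ estimate (proved via Calder\'on–Zygmund theory for the Newtonian potential and cut-offs).

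First I would establish the statement in the normalized case $R=1$. The closed annulus $\overline{B_1\setminus B_{1/2}}$ is compact, so I can cover it by finitely many balls $B_{\rho_i}(x_i)$ with the property that all the doubled balls $B_{2\rho_i}(x_i)$ lie inside $B_2\setminus B_{1/4}$; the number of such balls is bounded depending only on $N$. Summing the interior estimate on each $B_{\rho_i}(x_i)$ (with a fixed small $\rho_i$) yields
\begin{equation*}
\int_{B_1\setminus B_{1/2}} |D^2 z|^k\,dx \leq C\left(\int_{B_2\setminus B_{1/4}} |\Delta z|^k\,dx + \int_{B_2\setminus B_{1/4}} |z|^k\,dx\right),
\end{equation*}
with $C=C(N,k)$.

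Next I would scale. Given $R>0$, define $\tilde z(y):=z(Ry)$, so that $\Delta_y\tilde z(y)=R^2(\Delta_x z)(Ry)$ and $D^2_y\tilde z(y)=R^2(D^2_x z)(Ry)$. Applying the $R=1$ estimate to $\tilde z$ and changing variables $x=Ry$ in each integral (which produces a factor $R^N$), I obtain
\begin{equation*}
R^{2k-N}\!\int_{B_R\setminus B_{R/2}}\!|D^2 z|^k\,dx \leq C\bigl(R^{2k-N}\!\int_{B_{2R}\setminus B_{R/4}}\!|\Delta z|^k\,dx + R^{-N}\!\int_{B_{2R}\setminus B_{R/4}}\!|z|^k\,dx\bigr),
\end{equation*}
and dividing through by $R^{2k-N}$ gives exactly the announced inequality, with the right weight $R^{-2k}$ appearing in front of the $L^k$ norm of $z$.

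There is no real obstacle in this argument: both the covering argument and the scaling are routine, and the constant depends only on $N$ and $k$ because the geometric data of the covering depend only on $N$. The only point that deserves care is bookkeeping the powers of $R$ in the change of variables, which is what produces the $R^{-2k}$ factor and confirms that the estimate is scale-invariant under $z\mapsto \tilde z$.
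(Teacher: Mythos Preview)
Your proposal is correct and follows exactly the route the paper indicates: the paper simply remarks that the case $R=1$ is ``just the standard elliptic estimate'' and that the general case follows by ``an obvious dilation argument,'' which is precisely your covering-by-balls plus scaling. Your bookkeeping of the powers of $R$ is accurate and recovers the $R^{-2k}$ factor.
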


\begin{lemma}[An interpolation inequality on an annulus]\label{lem3a}
For $R>0$ and $z=z(x)\in W^{2,1}(B_{2R}\setminus B_{R/4})$, we have
\begin{align*}
\int\limits_{B_R\setminus B_{R/2}}|D_xz|dx\leq CR\int\limits_{B_{2R}\setminus B_{R/4}}|\Delta z|dx+ CR^{-1}\int\limits_{B_{2R}\setminus B_{R/4}}|z|dx,
\end{align*}
with $C=C(n)>0$.
\end{lemma}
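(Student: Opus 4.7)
The plan is to establish the interpolation via a rescaling plus a cutoff--Newtonian-potential argument. First, a scaling reduction: setting $\tilde z(y)=z(Ry)$, we have $\nabla\tilde z(y)=R(\nabla z)(Ry)$ and $\Delta\tilde z(y)=R^{2}(\Delta z)(Ry)$, and the change of variables $x=Ry$ transforms the asserted inequality at scale $R$ into the same statement at $R=1$. Hence it suffices to prove that for $z\in W^{2,1}(B_2\setminus B_{1/4})$,
\[
\|\nabla z\|_{L^1(B_1\setminus B_{1/2})}\leq C\|\Delta z\|_{L^1(B_2\setminus B_{1/4})}+C\|z\|_{L^1(B_2\setminus B_{1/4})}.
\]

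Next, I would localize with a cutoff. Fix $\phi\in C_c^\infty(B_2\setminus B_{1/4})$ with $\phi\equiv 1$ on $B_1\setminus B_{1/2}$ and $|\nabla\phi|+|\Delta\phi|\leq C$, and set $w=\phi z$, extended by zero to $\mathbb{R}^n$. Since $w$ is compactly supported, the Newtonian representation reads $w=-\mathcal{N}_n\ast\Delta w$, where $\mathcal{N}_n(x)=c_n|x|^{2-n}$ is the fundamental solution of $-\Delta$; differentiating, $\nabla w=-\nabla\mathcal{N}_n\ast\Delta w$. The kernel $|\nabla\mathcal{N}_n(x)|\leq C|x|^{1-n}$ is only weakly singular, with $\sup_y\int_{B_4(y)}|\nabla\mathcal{N}_n(x-y)|\,dx\leq C$, so Young's convolution inequality yields $\|\nabla w\|_{L^1(\mathbb{R}^n)}\leq C\|\Delta w\|_{L^1(\mathbb{R}^n)}$. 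Expanding $\Delta w=\phi\Delta z+2\nabla\phi\cdot\nabla z+z\Delta\phi$ and using the bounds on $\phi$, this would give
\[
\|\nabla z\|_{L^1(B_1\setminus B_{1/2})}\leq C\bigl(\|\Delta z\|_{L^1(A)}+\|\nabla z\|_{L^1(\mathrm{supp}\,\nabla\phi)}+\|z\|_{L^1(A)}\bigr),\qquad A:=B_2\setminus B_{1/4}.
\]

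The main obstacle is the middle feedback term $\|\nabla z\|_{L^1(\mathrm{supp}\,\nabla\phi)}$, which cannot be absorbed by Calder\'on--Zygmund since that tool is unavailable at $p=1$. I would handle it by iterating the above argument on a telescoping family of nested annuli $A_0\subset A_1\subset\cdots\subset A$ interpolating between the target $B_1\setminus B_{1/2}$ and $A$, with cutoffs whose transition widths shrink geometrically, so that the compounding feedback coefficients form a convergent series and the buffer $\nabla z$ contributions get absorbed. An alternative route that sidesteps the cutoff issue entirely is to work in polar coordinates: a one-dimensional Gagliardo--Nirenberg interpolation in $r$ controls $\|\partial_r z\|_{L^1}$ by $\|\partial_r^2 z\|_{L^1}$ and $\|z\|_{L^1}$, and the identity $\partial_r^2 z=\Delta z-\frac{n-1}{r}\partial_r z-\frac{1}{r^2}\Delta_\theta z$ expresses $\partial_r^2 z$ via $\Delta z$ plus lower-order terms, while the tangential component of $\nabla z$ is controlled by a similar interpolation on $S^{n-1}$; the $\partial_r z$ feedback from the identity is absorbed by the same telescoping device. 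Either route, combined with the scale invariance, yields the inequality with a constant depending only on $n$.
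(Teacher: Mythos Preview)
Your scaling reduction to $R=1$ is correct and matches the paper; the paper itself does not give a self-contained proof of the $R=1$ case but refers to \cite{PhS}. So the substantive content is your argument for $R=1$.

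The Newtonian-potential step is fine: for $w=\phi z$ with compact support in a fixed ball, the kernel $|\nabla\mathcal N_n(x)|\le C|x|^{1-n}$ is locally integrable, and Young's inequality indeed gives $\|\nabla w\|_{L^1}\le C\|\Delta w\|_{L^1}$. The gap is in your proposed absorption of the feedback term $\|\nabla z\|_{L^1(\mathrm{supp}\,\nabla\phi)}$. If you telescope with annuli $A_0\subset A_1\subset\cdots$ and transition widths $\delta_k$, hole-filling yields
\[
g(A_{k-1})\le \theta_k\, g(A_k)+E_k,\qquad \theta_k=\frac{C_0}{C_0+\delta_k}<1,
\]
where $g(\cdot)=\|\nabla z\|_{L^1(\cdot)}$. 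Since all annuli must lie in $A=B_2\setminus B_{1/4}$, one has $\sum_k\delta_k\le C$, and then $\sum_k\log\theta_k\sim -\sum_k\delta_k/C_0$ is bounded below, so $\prod_k\theta_k$ stays bounded away from $0$. The term $\bigl(\prod_k\theta_k\bigr)\,g(A)$ therefore cannot be eliminated, and the resulting bound still depends on $\|\nabla z\|_{L^1(A)}$, which is precisely what you must not use. The same objection applies to the polar-coordinate variant you sketch: the identity $\partial_r^2 z=\Delta z-\tfrac{n-1}{r}\partial_r z-\tfrac{1}{r^2}\Delta_\theta z$ produces both a $\partial_r z$ feedback and an uncontrolled $\Delta_\theta z$, and ``the same telescoping device'' fails for the same reason.

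There is a one-line fix that avoids cutoffs altogether. Set $f=(\Delta z)\chi_A$ and $v=\mathcal N_n\ast f$, so that $\Delta v=\Delta z$ on $A$ and $h:=z-v$ is harmonic on $A$. Your own kernel estimate gives $\|v\|_{L^1(A)}+\|\nabla v\|_{L^1(A)}\le C\|\Delta z\|_{L^1(A)}$. Interior gradient estimates for harmonic functions (mean-value property) then give $\|\nabla h\|_{L^\infty(B_1\setminus B_{1/2})}\le C\|h\|_{L^1(A)}\le C\bigl(\|z\|_{L^1(A)}+\|\Delta z\|_{L^1(A)}\bigr)$. Adding the two pieces yields the desired bound with no feedback to absorb.
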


Lemmas \ref{lem2a} and \ref{lem3a} follow from the case $R = 1$ and an obvious dilation argument.
Lemma~\ref{lem2a} with $R = 1$ is just the standard elliptic estimate. For Lemma \ref{lem3a} with $R = 1$, 
see e.g.~\cite{PhS}.

\subsection{Basic estimates, identities and comparison properties}

We have the following basic integral estimates for solutions of (\ref{1}). 

\begin{lemma}\label{lem4a}
Let $pq>1$, $a,b>-2$, $N\geq 3$ and $(u,v)$ be a positive solution of (\ref{1}) in $\Omega={\mathbb R}^N$. Then there holds 
\begin{align}\label{01}
&\int\limits_{B_R\setminus B_{R/2}}|x|^av^p\,dx\le C R^{N-2-\alpha-\frac{a+bp}{pq-1}}, \;\;\; 
   \int\limits_{B_R\setminus B_{R/2}}|x|^bu^qdx\le C R^{N-2-\beta-\frac{b+aq}{pq-1}},\quad R>0,
\end{align} 
with $C=C(N,p,q,a,b)>0$.
\end{lemma}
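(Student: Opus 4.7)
The plan is to apply the rescaled test-function method of Mitidieri--Pohozaev~\cite{MP01}, coupling the two equations through a weighted H\"older inequality to close a feedback loop that produces the scaling-critical exponent.

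Concretely, I will fix $\xi\in C^\infty_c(\mathbb{R}^N)$ with $0\le\xi\le 1$, $\xi\equiv 1$ on $\overline{B_1}\setminus B_{1/2}$, and $\mathrm{supp}\,\xi\subset B_2\setminus\overline{B_{1/4}}$, and for a large integer $m$ set $\psi_R(x)=\xi(x/R)^m$. This ensures $\psi_R\equiv 1$ on $B_R\setminus\overline{B_{R/2}}$, $\mathrm{supp}\,\psi_R\subset A_R:=B_{2R}\setminus\overline{B_{R/4}}$, and the key pointwise bound $|\Delta\psi_R|\le CR^{-2}\psi_R^{1-2/m}$ on $A_R$. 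Writing
$$I_R:=\int |x|^a v^p\psi_R\,dx,\qquad J_R:=\int |x|^b u^q\psi_R\,dx,$$
I multiply each equation of \eqref{1} by $\psi_R$ and integrate by parts twice (no boundary contribution, as $\mathrm{supp}\,\psi_R$ is compact in $\mathbb{R}^N\setminus\{0\}$), obtaining $I_R=\int_{A_R}u(-\Delta\psi_R)\,dx$ and $J_R=\int_{A_R}v(-\Delta\psi_R)\,dx$.

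To close the feedback I apply weighted H\"older. On the $v$-identity, decomposing $v=(|x|^a v^p\psi_R)^{1/p}\cdot|x|^{-a/p}\psi_R^{-1/p}$ and pairing with $|\Delta\psi_R|$ via conjugate exponents $p,p/(p-1)$ (valid since $pq>1$ and $p\ge q$ force $p>1$), then using $|x|\sim R$ on $A_R$ and $m$ large enough that the residual power of $\psi_R$ is $\le 1$, I get
$$J_R\le CR^{e_2}I_R^{1/p},\qquad e_2:=N-2-\tfrac{N+a}{p}.$$
The analogous manipulation on the $u$-identity (when $q>1$) yields $I_R\le CR^{e_1}J_R^{1/q}$ with $e_1:=N-2-\tfrac{N+b}{q}$. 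Chaining produces $I_R^{(pq-1)/(pq)}\le CR^{e_1+e_2/q}$, and a direct algebraic check identifies
$$\frac{pq}{pq-1}\Bigl(e_1+\frac{e_2}{q}\Bigr)=\frac{(N-2)p(q+1)-p(N+b)-(N+a)}{pq-1}=N-2-\alpha-\frac{a+bp}{pq-1}.$$
Since $\psi_R\equiv 1$ on $B_R\setminus B_{R/2}$, this yields the first bound of \eqref{01}; the second follows symmetrically by swapping the roles of the two equations.

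The main obstacle is the case $0<q\le 1$, which is permitted by the hypothesis $pq>1$. There H\"older on the $u$-identity cannot produce a bound of the form $\int u\lesssim J_R^{1/q}$ in the needed direction, so the symmetric feedback loop is unavailable. The remedy is to exploit superharmonicity of $u$: since $-\Delta u=|x|^a v^p\ge 0$ on $\mathbb{R}^N$ with $N\ge 3$, the spherical averages $\bar u(r)$ are non-increasing, and the identity $-(r^{N-1}\bar u'(r))'=r^{N-1+a}\overline{v^p}(r)$ combined with dyadic summation converts control on $J_R$ (still accessible from the $v$-side alone, using the $I_R$ feedback) into control on $\int_{A_R}u\,dx$, which plugged back into the $u$-identity closes the argument with the same exponent. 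The algebra is routine but the weight bookkeeping is the delicate point.
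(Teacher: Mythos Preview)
Your approach via rescaled test functions is genuinely different from the paper's, and when $p,q>1$ it is correct: the feedback $J_R\le CR^{e_2}I_R^{1/p}$, $I_R\le CR^{e_1}J_R^{1/q}$ closes with exactly the claimed exponent, and the algebra you wrote checks out. The paper, by contrast, uses a maximum-principle argument \`a la Armstrong--Sirakov. By a local minimum estimate on annuli (the rescaled Lemma~\ref{lem6a}, going back to Brezis--Cabr\'e), the infima $m_1(R)=\inf_{B_{2R}\setminus B_R}u$ and $m_2(R)=\inf_{B_{2R}\setminus B_R}v$ satisfy
\[
m_1(R)\ \ge\ CR^{2-N}\!\int_{B_{2R}\setminus B_R}|x|^av^p\,dx\ \ge\ CR^{2+a}m_2(R)^p,
\qquad
m_2(R)\ \ge\ CR^{2+b}m_1(R)^q,
\]
and these chain---with no restriction whatsoever on the size of $p,q$---to $m_1(R)\le CR^{-\alpha-(a+bp)/(pq-1)}$; substituting back into the first inequality gives the integral bound. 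What the paper's route buys is precisely uniformity over all $p,q>0$ with $pq>1$.

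The genuine gap in your proposal is the case $q\le 1$. Your proposed fix does not work as written: the radial identity $-(r^{N-1}\bar u')'=r^{N-1+a}\,\overline{v^p}$ links $\bar u$ to $v^p$ (an $I_R$-type quantity), not to $J_R$, so it cannot ``convert control on $J_R$ into control on $\int_{A_R}u$''. Moreover, integrating that identity only yields \emph{lower} bounds on $\bar u$ (positivity of the right-hand side and of $\bar u$ at larger radii), whereas closing your loop requires an \emph{upper} bound on $\int_{A_R}u$. This obstruction is not a technicality: the paper's introduction explicitly notes that the rescaled test-function method of \cite{MP01} covers only $p,q\ge 1$, and that extending it to $q<1$ is ``rather involved''---which is exactly why the paper adopts the minimum-principle route. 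If you want to repair the $q\le 1$ case, the natural ingredient is the weak Harnack inequality for the superharmonic $u$, giving $\int_{A_R}u\le CR^N\inf_{A_R}u$; but bounding $\inf_{A_R}u$ from above then amounts to the paper's argument.
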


A simple proof of Lemma \ref{lem4a} is given in appendix, based on ideas from \cite{AS11}.

\medskip
From  Lemmas \ref{lem2a}--\ref{lem4a} and H\"older's inequality, we easily deduce the following lemma.
\begin{lemma}\label{lem}
Let $pq>1$ $a,b>-2$, $N\geq 3$ and $(u,v)$ solution of (\ref{1}), there hold 

\begin{align*}
&\int\limits_{B_R\setminus B_{R/2}}udx\le C R^{N-\alpha-\frac{a+bp}{pq-1}}, \qquad 
\int\limits_{B_R\setminus B_{R/2}}vdx\le C R^{N-\beta-\frac{b+aq}{pq-1}},\\
&\int\limits_{B_R\setminus B_{R/2}}|D_xu|dx\le C R^{N-1-\alpha-\frac{a+bp}{pq-1}}, \qquad
\int\limits_{B_R\setminus B_{R/2}}|D_xv|dx\le C R^{N-1-\beta-\frac{b+aq}{pq-1}},\\
&\int\limits_{B_R\setminus B_{R/2}}|\Delta u|dx\le C R^{N-2-\alpha-\frac{a+bp}{pq-1}} , \qquad
   \int\limits_{B_R\setminus B_{R/2}}|\Delta v|dx\le C R^{N-2-\beta-\frac{b+aq}{pq-1}},
\end{align*}
with $C=C(N,p,q,a,b)>0$.
\end{lemma}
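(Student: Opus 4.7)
The plan is to derive all six bounds from Lemma \ref{lem4a} by elementary manipulations, treating the three pairs in increasing order of difficulty. The estimates on $\int|\Delta u|\,dx$ and $\int|\Delta v|\,dx$ are free of charge: since $-\Delta u=|x|^av^p\ge0$ and $-\Delta v=|x|^bu^q\ge0$, we have $|\Delta u|=|x|^av^p$ and $|\Delta v|=|x|^bu^q$ pointwise, and Lemma \ref{lem4a} yields exactly the two claimed bounds.

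For the bounds on $\int u\,dx$ and $\int v\,dx$, I would first observe that on the annulus $R/2\le|x|\le R$ one has $|x|^a\asymp R^a$ with constants depending only on $a$, so that $\int_{B_R\setminus B_{R/2}}v^p\,dx\le CR^{-a}\int_{B_R\setminus B_{R/2}}|x|^av^p\,dx\le CR^{N-2-a-\alpha-(a+bp)/(pq-1)}$, and similarly $\int_{B_R\setminus B_{R/2}}u^q\,dx\le CR^{N-2-b-\beta-(b+aq)/(pq-1)}$. Since $pq>1$ together with $p\ge q$ forces $p>1$, H\"older's inequality with exponent $p$ on the annulus yields $\int v\,dx\le|B_R\setminus B_{R/2}|^{1-1/p}(\int v^p\,dx)^{1/p}$, and a direct check of exponents using the algebraic identities $\alpha/p+2/p=\beta$ and $(a+bp)/(p(pq-1))+a/p=(b+aq)/(pq-1)$ reduces the right-hand side to exactly $CR^{N-\beta-(b+aq)/(pq-1)}$. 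For $\int u\,dx$, the symmetric argument with exponent $q$ works when $q>1$; when $q\le 1$, H\"older runs the wrong way, and one must instead bootstrap the just-obtained bound on $\int v$ through the equation $-\Delta u=|x|^av^p$ (combined with Lemmas \ref{lem2a}--\ref{lem3a}) to close the loop.

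The gradient bounds then follow from Lemma \ref{lem3a} applied to $u$ (respectively $v$) on the enlarged annulus $B_{2R}\setminus B_{R/4}$: the lemma gives $\int_{B_R\setminus B_{R/2}}|D_xu|\,dx\le CR\int_{B_{2R}\setminus B_{R/4}}|\Delta u|\,dx+CR^{-1}\int_{B_{2R}\setminus B_{R/4}}u\,dx$, and both terms on the right are of order $R^{N-1-\alpha-(a+bp)/(pq-1)}$ thanks to the bounds on $|\Delta u|$ and $u$ already proved; the passage from $B_R\setminus B_{R/2}$ to $B_{2R}\setminus B_{R/4}$ alters only multiplicative constants, since each estimate is a pure power of $R$. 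The bound on $\int|D_xv|\,dx$ is identical after interchanging the roles of $u$ and $v$.

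The main obstacle I anticipate is the case $q\le1$ in the bound on $\int u$: H\"older cleanly converts a bound on $\int u^q$ into one on $\int u$ only when $q\ge 1$, and for $q<1$ the conversion has to be done indirectly, either by bootstrapping through both equations of the system or by invoking the positivity and superharmonicity of $u$ via a Newtonian-potential-style representation. Everything else is a routine consequence of Lemma \ref{lem4a} and the interpolation/elliptic lemmas of Subsection 2.1, precisely as the paper suggests.
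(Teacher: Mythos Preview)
Your outline matches the paper's one-line sketch exactly: Lemma~\ref{lem4a} gives the two Laplacian bounds for free, H\"older (with exponent $p>1$) turns the bound on $\int|x|^av^p$ into the bound on $\int v$, the symmetric step with exponent $q$ handles $\int u$ when $q>1$, and Lemma~\ref{lem3a} then yields the gradient bounds. You are also right that the case $q\le 1$ is the only real obstacle.

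Your proposed fix for $q\le 1$, however, does not close. Feeding the bound on $\int v$ back through $-\Delta u=|x|^av^p$ only reproduces the estimate on $\int|\Delta u|$ that you already have, and both Lemma~\ref{lem2a} and Lemma~\ref{lem3a} carry $\int u^k$ (respectively $\int u$) on their right-hand sides; they estimate derivatives of $u$ in terms of $\Delta u$ \emph{and} $u$, so they cannot produce a bound on $\int u$ from $\int|\Delta u|$ alone. What one actually needs here is the global superharmonicity of $u$ on ${\mathbb R}^N$: the spherical mean $\bar u(r)$ is nonincreasing, and integrating $-\Delta u=|x|^av^p$ over $B_R$ gives $-R^{N-1}\bar u'(R)=c\int_{B_R}|x|^av^p$. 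The dyadic estimate of Lemma~\ref{lem4a} then controls $\bar u(R)-\lim_{r\to\infty}\bar u(r)$ by $CR^{-\alpha-(a+bp)/(pq-1)}$, and the limit itself must vanish because $\inf_{B_{2R}\setminus B_R}u\to 0$ (this is established in the proof of Lemma~\ref{lem4a}) forces the greatest harmonic minorant of $u$ to be zero. This yields $\bar u(R)\le CR^{-\alpha-(a+bp)/(pq-1)}$ and hence the desired $L^1$ bound. The paper's sketch is equally silent on this point, so the gap is shared; but the ``bootstrap through Lemmas~\ref{lem2a}--\ref{lem3a}'' you describe would not fill it.
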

\medskip

The following Rellich-Pohozaev identity plays a key role in the proof of Theorem \ref{cond}. It is probably known 
(see e.g \cite{CR10}), but we give a proof in appendix for completeness, especially since there is a slight technical difficulty when $a<0$ or $b<0$.
\begin{lemma}[Rellich-Pohozaev identity]\label{pohozaev}
Let $a_1, a_2\in {\mathbb R}$ satisfy $a_1+a_2=N-2$ and $(u,v)$ solution of (\ref{1}), there holds 
\begin{align*}
\bigg(&\frac{N+a}{p+1}-a_1\bigg)\int\limits_{B_R}|x|^av^{p+1}dx+\bigg(\frac{N+b}{q+1}-a_2\bigg)\int\limits_{B_R}|x|^bu^{q+1}dx\\
=&R^{1+b} \int\limits_{|x|=R}\frac{u^{q+1}}{q+1}d\sigma_R+R^{1+a} \int\limits_{|x|=R}\frac{v^{p+1}}{q+1}d\sigma_R\\
&+ R\int\limits_{|x|=R}\left(u'v'-\nabla u.\nabla v\right) d\sigma_R + \int\limits_{|x|=R}\big(a_1u'v+a_2uv'\big)\sigma_R
\end{align*}
where $'=\frac{1}{|x|}x.\nabla=\partial /\partial r$.
\end{lemma}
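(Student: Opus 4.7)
The natural approach is a Rellich-Pohozaev multiplier argument applied to the system via cross-multiplication. I would multiply the first equation by $M_1:=x\cdot\nabla v+a_1 v$ and the second equation by $M_2:=x\cdot\nabla u+a_2 u$, sum, and integrate. Since $(u,v)$ is only $C^2$ away from the origin and the weights $|x|^a,|x|^b$ are singular there when $a$ or $b$ is negative, I would carry out the computation on the annulus $A_\eps:=B_R\setminus\overline{B_\eps}$ for small $\eps>0$ and then pass to the limit $\eps\to 0$.

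On $A_\eps$ the computation is routine. On the right-hand side, writing $v^p\nabla v=\nabla v^{p+1}/(p+1)$ and invoking the classical identity $\mathrm{div}(|x|^a x)=(N+a)|x|^a$ produces the coefficient $(N+a)/(p+1)$ in front of the interior integral of $|x|^a v^{p+1}$ together with spherical boundary terms in $v^{p+1}$, and symmetrically for the second equation with $b,q,u$. On the left-hand side, integration by parts together with the symmetrization
\[
\sum_{i,j}x_j\bigl(\partial_iu\,\partial_j\partial_iv+\partial_iv\,\partial_j\partial_iu\bigr)=x\cdot\nabla(\nabla u\cdot\nabla v)
\]
yields an interior integral of $\nabla u\cdot\nabla v$ with coefficient $(2-N)$ coming from the $(x\cdot\nabla\,\cdot)$ parts and coefficient $(a_1+a_2)$ coming from the $a_1 v,\,a_2 u$ parts. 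The hypothesis $a_1+a_2=N-2$ is used precisely here: these two bulk contributions cancel exactly, leaving only boundary integrals on $\partial A_\eps=\{|x|=R\}\cup\{|x|=\eps\}$, with the structure displayed in the lemma at $|x|=R$ and an oppositely-oriented analogue at $|x|=\eps$.

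The technical heart of the argument is the limit $\eps\to 0$. The interior integrals converge by monotone convergence, using that $u,v\in C(\Omega)$ are bounded near $0$ and that $a,b>-2$ ensures local integrability of $|x|^a,|x|^b$. The boundary terms at $|x|=\eps$ involving $v^{p+1}$ and $u^{q+1}$ are of size $O(\eps^{N+a})$ and $O(\eps^{N+b})$ respectively, and thus vanish. The main obstacle is controlling the gradient boundary terms at $|x|=\eps$, for which I would apply interior elliptic estimates to $-\Delta u=|x|^a v^p$ on the ball $B_{|x|/2}(x)$ with $x$ close to $0$: since $v$ is bounded there and $|y|^a$ is comparable to $|x|^a$ on such a ball, the standard gradient estimate yields $|\nabla u(x)|\leq C(|x|^{a+1}+|x|^{-1})=O(|x|^{-1})$ for $|x|$ small, and analogously $|\nabla v(x)|=O(|x|^{-1})$. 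Combined with the surface-area factor $\eps^{N-1}$, all the gradient boundary terms at $|x|=\eps$ are $O(\eps^{N-2})$ and hence vanish since $N\geq 3$. Rearranging the resulting limit identity so that the interior integrals appear with coefficients $(N+a)/(p+1)-a_1$ and $(N+b)/(q+1)-a_2$ then produces the claimed Rellich-Pohozaev identity.
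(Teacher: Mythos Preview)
Your argument is correct and follows the same overall scheme as the paper: a Pohozaev-type multiplier computation carried out on the annulus $B_R\setminus B_\eps$, followed by the limit $\eps\to 0$. The one substantive difference lies in how the gradient boundary terms at $|x|=\eps$ are disposed of. The paper does not establish a pointwise bound on $\nabla u,\nabla v$ near the origin; instead it uses elliptic $W^{2,k}$ regularity (with $k\le N/2$, available since $a,b>-2$) to obtain $|\nabla u|,|\nabla v|\in L^N_{loc}$, and then an averaging argument (Lemma~\ref{lemDistrib}) to extract a \emph{sequence} $\eps_i\to 0$ along which $\int_{|x|=\eps_i}(|\nabla u|^2+|\nabla v|^2)\,d\sigma_{\eps_i}\to 0$. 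Your route via the interior gradient estimate on $B_{|x|/2}(x)$, yielding the pointwise bound $|\nabla u(x)|,|\nabla v(x)|=O(|x|^{-1})$, is more elementary and in fact slightly stronger, since it makes every $\eps\to 0$ work rather than only a subsequence; the paper's $L^N$--subsequence device, on the other hand, is more robust in situations where such a clean pointwise bound is unavailable.
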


\vskip 0.2cm

We next prove an important comparison property for system (\ref{1}) under condition on the difference $a-b$. 
We follow  the ideas  of Bidaut-V\'eron in \cite{BV99} and Souplet in \cite{Sou09}.
\begin{lemma}[Comparison property]\label{comparison}
Let $pq>1$, $N\geq 3$ and $(u,v)$ be a positive solution of (\ref{1}). Assume (\ref{40}). Then 
\begin{align}\label{compPpty}
|x|^a\frac{v^{p+1}}{p+1}\leq |x|^b\frac{u^{q+1}}{q+1}.
\end{align}
\end{lemma}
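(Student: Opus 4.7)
Following the strategy of Bidaut-V\'eron \cite{BV99} and Souplet \cite{Sou09}, adapted to the weighted setting, I would introduce
$$\Phi(x) := \frac{|x|^a v^{p+1}(x)}{p+1} - \frac{|x|^b u^{q+1}(x)}{q+1},$$
and aim to show $\Phi \leq 0$ on $\mathbb{R}^N \setminus\{0\}$ via an integral/maximum-principle argument.

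First, I would compute $-\Delta\Phi$ explicitly. Using the Leibniz rule $\Delta(|x|^s f) = |x|^s\Delta f + 2s|x|^{s-2}x\cdot\nabla f + s(s+N-2)|x|^{s-2}f$ together with the system (\ref{1}) to eliminate $\Delta u$ and $\Delta v$, the ``bulk'' cross terms $|x|^{a+b}u^q v^p$ arising from $-v^p\Delta v$ and $-u^q\Delta u$ cancel exactly. What remains is an expression that splits into a difference of gradient-square terms $q|x|^b u^{q-1}|\nabla u|^2 - p|x|^a v^{p-1}|\nabla v|^2$, zero-order ``curvature'' terms $\frac{b(b+N-2)}{q+1}|x|^{b-2}u^{q+1} - \frac{a(a+N-2)}{p+1}|x|^{a-2}v^{p+1}$, and mixed radial-derivative terms $\frac{2b}{q+1}|x|^{b-2}x\cdot\nabla u^{q+1} - \frac{2a}{p+1}|x|^{a-2}x\cdot\nabla v^{p+1}$.

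Second, I would multiply the resulting identity by $\Phi_+^{\kappa-1}\eta^{2m}$ for some $\kappa>1$, $m$ large, and a standard cutoff $\eta$ supported in an annulus $\{R/2\le|x|\le 2R\}$, and integrate over $\mathbb{R}^N$. Integration by parts on the $\Delta\Phi$ term produces the good quadratic $(\kappa-1)\int\Phi_+^{\kappa-2}|\nabla\Phi_+|^2\eta^{2m}$; a second integration by parts on the mixed radial-derivative terms, using $\operatorname{div}(|x|^{s-2}x) = (N+s-2)|x|^{s-2}$, converts them into further zero-order $v^{p+1}$ and $u^{q+1}$ contributions that combine with the curvature terms. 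Condition~(\ref{40}) --- specifically $0\le a-b\le (N-2)(p-q)$ --- should be precisely the algebraic compatibility needed to arrange the net zero-order coefficients of $v^{p+1}$ and $u^{q+1}$ with signs compatible with the maximum principle on $\{\Phi>0\}$. Dominating the remaining boundary/cutoff terms via Lemma~\ref{lem4a} and letting $R\to\infty$ would then force $\Phi_+\equiv 0$, which is (\ref{compPpty}).

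The main obstacle is the second step: the mixed radial-derivative and curvature terms carry no a priori sign, and the delicate balancing that absorbs them into the good quadratic term (or, equivalently, that gives the correct sign of the zero-order contributions) is exactly what condition~(\ref{40}) encodes. In the unweighted case $a=b=0$ this reduces to Souplet's assumption $p\ge q$ for the Lane-Emden system \cite{Sou09}; the content of the lemma is that $a-b\le (N-2)(p-q)$ is the sharp analog in the Hardy-H\'enon setting, allowing the $|x|$-dependent coefficients to play the same role as the pure exponent mismatch $p-q$ in the flat case.
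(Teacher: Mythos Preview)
Your approach has a genuine structural gap. Working directly with $\Phi=\frac{|x|^av^{p+1}}{p+1}-\frac{|x|^bu^{q+1}}{q+1}$ produces, as you note, the gradient-square terms $q|x|^bu^{q-1}|\nabla u|^2-p|x|^av^{p-1}|\nabla v|^2$ in $-\Delta\Phi$. The term $-p|x|^av^{p-1}|\nabla v|^2$ has the wrong sign and is \emph{not} absorbable into the good quadratic $(\kappa-1)\int\Phi_+^{\kappa-2}|\nabla\Phi_+|^2$: since $\nabla\Phi=|x|^av^p\nabla v-|x|^bu^q\nabla u+(\text{radial weight terms})$, the quantity $|\nabla\Phi_+|^2$ mixes $\nabla u$ and $\nabla v$ and gives no one-sided control of $|\nabla v|^2$ alone. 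Condition~(\ref{40}) has nothing to say about this term --- it is a relation between $a-b$ and $p-q$, not between the gradients of the two components --- so the ``algebraic compatibility'' you invoke cannot close the estimate. The zero-order and mixed radial terms can indeed be rearranged using (\ref{40}), but the bad $|\nabla v|^2$ contribution survives.

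The paper avoids this obstruction by a different choice of auxiliary function: it sets $\sigma=(q+1)/(p+1)$, $h=(a-b)/(p+1)$, $l=\sigma^{-1/(p+1)}$ and works with $w:=v-l|x|^{-h}u^\sigma$, which is \emph{linear} in $v$. Then $\Delta w$ involves $\Delta v$ (replaced by $-|x|^bu^q$) and $\Delta(|x|^{-h}u^\sigma)$; the latter produces only $|\nabla u|^2$ terms and radial $x\cdot\nabla u$ terms, with \emph{no} $|\nabla v|^2$ at all. Completing the square in these $\nabla u$ terms yields a quantity $K\ge 0$ precisely under (\ref{40}), after which the remaining zero-order part factors as $|x|^{a-h}u^{\sigma-1}\bigl[(v/l)^p-(|x|^{-h}u^\sigma)^p\bigr]$, nonnegative exactly on $\{w\ge 0\}$. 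Thus $w$ is subharmonic on $\{w>0\}$, and an energy argument on $B_R$ (using Lemma~\ref{lem} to kill the boundary flux along a sequence $R_i\to\infty$) forces $w_+\equiv 0$, which is equivalent to $\Phi\le 0$. The point is that the clever power $\sigma$ and weight $|x|^{-h}$ are chosen so that the comparison happens at the level of $v$ versus $u^\sigma$, not $v^{p+1}$ versus $u^{q+1}$; this is what eliminates the uncontrollable $|\nabla v|^2$ term.
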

\begin{proof}
Let $\sigma:=(q+1)/(p+1)\in (0,1]$, $l:=\sigma^{-1/(p+1)}$, $h:=(a-b)/(p+1)$ and $w:= v-l|x|^{-h} u^\sigma$.   
For all $x\neq 0$, we have
\begin{align*}
\Delta w= \Delta v- l\sigma |x|^{-h}u^{\sigma-1}\Delta u +l |x|^{-h}u^\sigma K,
\end{align*}
where
\begin{align*}
K=\frac{h(N-2-h)}{|x|^2}+\sigma(1-\sigma)\frac{|\nabla u|^2}{u^2}+ 2h\sigma \frac{x}{|x|^2}
\cdot\frac{\nabla u}{u}.
\end{align*}

If  $\sigma=1$ then $h=0$, thus $K=0$.

If $\sigma\in (0,1)$ then it follows from  (\ref{40}) that
\begin{align*}
K=h\Bigl(N-2-\frac{h}{1-\sigma}\Bigr)\frac{1}{|x|^2}+\sigma(1-\sigma)\Big|\frac{\nabla u}{u}
+ \frac{h}{1-\sigma}\frac{x}{|x|^2}\Big|^2 \geq 0.
\end{align*}
Hence, 
\begin{align*}
\Delta w &\geq \Delta v - l\sigma |x|^{-h}u^{\sigma-1}\Delta u\\
&=|x|^{a-h} u^{\sigma-1}\bigg((v/l)^p- \Big(|x|^{-h}u^\sigma\Big)^p\bigg).
\end{align*}
It follows that 
\begin{equation}\label{DeltaComp}
\Delta w\geq 0 \text{ \;in the set } \{x\in {\mathbb R}^N\setminus\{0\};\ w(x)\geq 0\}.
\end{equation}

If $p\geq 2$,  then for any $R>0$ and $\eps\in (0,R)$, we have
\begin{align*}
\int\limits_{B_R\setminus B_\eps} |\nabla w_{+}|^2dx
=-\int\limits_{B_R\setminus B_\eps}w_{+}\Delta w dx
+\int\limits_{|x|=R}w_{+}\partial_\nu w\, d\sigma_R
+\int\limits_{|x|=\eps}w_{+}\partial_\nu w\, d\sigma_\eps
\end{align*}
Using (\ref{DeltaComp}), the boundedness of $w_+$ near $x=0$, and passing to the limit with $\eps=\eps_i\to 0$, where $\eps_i$ is given by Lemma~\ref{lemDistrib}, we deduce that
\begin{align}\label{3}
\int\limits_{B_R} |\nabla w_{+}|^2dx\leq R^{N-1}\int\limits_{S^{N-1}}w_{+}(R)w_r(R)d\theta\leq \frac{R^{N-1}}{2}f_1'(R),
\end{align}
where $f_1(R):=\int_{S^{N-1}}(w_+)^2(R)d\theta$. 

On the other hand, let $g(R)=\int_{S^{N-1}}v^p(R)d\theta$ and note that $f_1\leq Cg^{2/p}$.
Lemma \ref{lem} guarantees that 
 $$\int_{R/2}^Rg(r)r^{N-1}dr \leq C R^{N-2-\alpha -\frac{a+bp}{pq-1}}.$$
Therefore $g(R_i)\to 0$ for some sequence $R_i\to \infty$. Consequently, $f_1(R_i)\to 0$ and there exists a sequence $\tilde{R}_i\to \infty$ such that $f_1'(\tilde{R}_i)\leq 0$. Letting $i\to \infty $ in (\ref{3}) with $R=\tilde{R}_i$, we conclude that $w_+$ is constant in ${\mathbb R}^N$. If $w=C>0$ then $v\geq C>0$ in ${\mathbb R}^N$, contradicting 
Lemma~\ref{lem}. Hence, $w_+=0$.

If $1<p< 2$,  then for any $R>0$, $\eps\in (0,R)$ and $\eta>0$, we have
\begin{align*}
(p-1)\int\limits_{B_R\setminus B_\eps}  (w_{+}+\eta)^{p-2} & |\nabla w_{+}|^2dx
=-\int\limits_{B_R\setminus B_\eps}(w_{+}+\eta)^{p-1}\Delta w dx \\
&+\int\limits_{|x|=R}(w_{+}+\eta)^{p-1}\partial_\nu w\, d\sigma_R
+\int\limits_{|x|=\eps}(w_{+}+\eta)^{p-1}\partial_\nu w\, d\sigma_\eps.
\end{align*}
Letting $\eta\to 0$ (passing to the limit in the LHS via monotone convergence) and using (\ref{DeltaComp}), it follows that
$$
(p-1)\int\limits_{B_R\setminus B_\eps}  w_{+}^{p-2}  |\nabla w_{+}|^2dx
\leq \int\limits_{|x|=R} w_{+}^{p-1}\partial_\nu w\, d\sigma_R
+\int\limits_{|x|=\eps} w_{+}^{p-1}\partial_\nu w\, d\sigma_\eps.
$$
Next passing to the limit with $\eps=\eps_i\to 0$, where $\eps_i$ is given by Lemma~\ref{lemDistrib}, we deduce that
\begin{align}\label{3z}
(p-1)\int\limits_{B_R} w^{p-2}_{+}|\nabla w_{+}|^2dx
\leq R^{N-1}\int\limits_{S^{N-1}}w^{p-1}_{+}(R)w_r(R)d\theta
\leq \frac{R^{N-1}}{p}f_2'(R),
\end{align}
where $f_2(R):=\int_{S^{N-1}}(w_+)^p(R)d\theta$. 
Using $f_2\leq g$ and arguing as above, we have $w_+=0$.
\end{proof}
\vskip 0.2cm 
\section{Proof of Theorem \ref{cond}}
 We first prove the theorem for dimension $N\geq 4$. The proof consists of 6 steps similar to those in \cite{Sou09}.
 We repeat these steps in detail for completeness 
 and because of the additional technicalities introduced by the coefficient $|x|^a, |x|^b$.
 Suppose that there exists a positive 
  solution $(u,v)$ of (\ref{1}) in ${\mathbb R}^N$.

\smallskip
\noindent{\bf Step 1:} {\it Preparations. } Let us choose $a_1, a_2$ such that
\begin{equation}\label{choicea1a2}
\frac{N+a}{p+1}>a_1, \; \frac{N+b}{q+1}>a_2
\end{equation}
and set $F(R)=\int_{B_R}|x|^bu^{q+1}$.
By the Rellich-Pohozaev identity (Lemma \ref{pohozaev}) and the comparison property (\ref{compPpty}), we have
\begin{align*}
F(R)\leq C\big(G_1(R)+G_2(R)\big),
\end{align*}
where 
\begin{align}
 &G_1(R)=R^{N+b}\int\limits_{S^{N-1}}u^{q+1}(R)d\theta,\\
&G_2(R)=R^N\int\limits_{S^{N-1}}\left(|D_xu(R)|+R^{-1}u(R) \right)\left(|D_xv(R)|+R^{-1}v(R) \right)d\theta.
\end{align}
We may assume that 
\begin{align}\label{condp}
p\geq \frac{N+2}{N-2}.
\end{align}
In fact, if $q\le p< \frac{N+2}{N-2}$, then we may apply Theorem \ref{cond2}
(which will be proved independently of Theorem \ref{cond} in Section 5).

\smallskip

\noindent{\bf Step 2:} {\it Estimation of $G_1(R)$.}
Let
\begin{align}
 \lambda=\frac{N-1}{N-3}, \; k=\frac{p+1}{p} \quad \text{ and }\quad \varepsilon >0.
\end{align}
(The number $\varepsilon$ will be ultimately chosen small; in what follows, the constant $C$ may depend on $\varepsilon$.) By the Lemma \ref{1b}, we have
\begin{align*}
&\|u\|_\lambda\leq C\left(\|D_\theta^2 u\|_{1+\varepsilon}+ \|u\|_1\right)\leq C\left(R^2\|D_x^2 u\|_{1+\varepsilon}+ \|u\|_1\right).
\end{align*}
We show that 
\begin{align}\label{1e}
 \frac{1}{k}-\frac{1}{q+1} <\frac{2}{N-1}.
\end{align}
Indeed
\begin{align*}
 \frac{1}{k}-\frac{1}{q+1}=\frac{pq-1}{(p+1)(q+1)}=\frac{2}{(p+1)\beta}=\frac{2}{\alpha+\beta+2}<\frac{2}{N-1}.
\end{align*}
On the other hand, from (\ref{condp}), there exists $\mu>0$ such that
 \begin{align*}
\frac{1}{k}- \frac{1}{\mu} =\frac{2}{N-1}.
\end{align*}
It follows from (\ref{1e}) that $\mu>q+1$. 
By 
Lemma \ref{1b}, we have
\begin{align*}
\|u\|_\mu\leq C\left(\|D_\theta^2 u\|_k+ \|u\|_1\right)\leq C\left(R^2\|D_x^2 u\|_k+ \|u\|_1\right).
\end{align*}
If $\lambda < q+1$ then
\begin{align}\label{1c}
 \|u\|_{q+1}\leq \|u\|_\lambda^\nu\|u\|_\mu^{1-\nu}\leq C\left(R^2\|D_x^2 u\|_{1+\varepsilon}+ \|u\|_1\right)^\nu \left(R^2\|D_x^2 u\|_k+ \|u\|_1\right)^{1-\nu}.
\end{align}
If $\lambda\geq q+1$ then (\ref{1c}) is still valid with $\nu=1$. In both cases, we see that $\nu$ is given by
\begin{align}
 \nu=1-(p+1)A, \text{ with } A=\left(\frac{N-3}{N-1}-\frac{1}{q+1}\right)_+.
\end{align}
Therefore,
\begin{align}\label{estimG1step2}
\left[R^{-N-b}G_1(R)\right]^{1/q+1}\leq CR^2\left(\|D_x^2 u\|_{1+\varepsilon}+ R^{-2}\|u\|_1\right)^\nu \left(\|D_x^2 u\|_k+ R^{-2}\|u\|_1\right)^{1-\nu}.
\end{align}
{\bf Step 3:} {\it Estimation of $G_2(R)$.}
Let 
\begin{align*}
 m=\frac{q+1}{q}, \quad \rho=\frac{N-1}{N-2}.
\end{align*}
By Lemma \ref{1b}, we have
\begin{align}
\|D_xu\|_\rho\leq C\left(\|D_\theta D_x u\|_{1+\varepsilon}+\|D_xu\|_1\right)
 \leq C\left(R\|D^2_x u\|_{1+\varepsilon}+\|D_xu\|_1\right).
\end{align}

{\it Case 1.} $q>1/(N-2)$. Let $\gamma_1, \gamma_2$ be defined by
\begin{align*}
 \frac{1}{\gamma_1}=\frac{p}{p+1}-\frac{1}{N-1}, \quad \frac{1}{\gamma_2}=\frac{q}{q+1}-\frac{1}{N-1}.
\end{align*}
Then we have

\begin{align*}
k<\gamma_1<\infty, \quad m<\gamma_2<\infty.
\end{align*}
Assume that we can find $z\in (1,\infty)$ such that
\begin{align}\label{16}
 \frac{1}{k}-\frac{1}{N-1}\leq \frac{1}{z}\leq 1-\frac{1}{N-1}
\end{align}
and 
\begin{align}\label{17}
 \frac{1}{m}-\frac{1}{N-1}\leq 1-\frac{1}{z}\leq 1-\frac{1}{N-1}.
\end{align}

By the same estimate as in \cite{Sou09}, we have

\begin{equation}\label{1d}
 \begin{split}
 G_2(R)\le C &R^{N+2}\left(\|D_x^2u\|_{1+\varepsilon}+ R^{-1}\|D_xu\|_1+ R^{-2}\|u\|_1\right)^{\tau_1}\\
&\times \left(\|D_x^2u\|_k+ R^{-1}\|D_xu\|_1+ R^{-2}\|u\|_1\right)^{1-\tau_1}\\
&\times \left(\|D_x^2v\|_{1+\varepsilon}+ R^{-1}\|D_xv\|_1+ R^{-2}\|v\|_1\right)^{\tau_2}\\
&\times \left(\|D_x^2v\|_m+ R^{-1}\|D_xv\|_1+ R^{-2}\|v\|_1\right)^{1-\tau_2}.
\end{split}
\end{equation}
where
\begin{align*}
 \tau_1=1-(p+1)A_1, \quad A_1=\frac{N-2}{N-1}-\frac{1}{z},\\
 \tau_2=1-(q+1)A_2, \quad A_2=\frac{1}{z}-\frac{1}{N-1}.
\end{align*}

{\it Case 2.}  $q\leq 1/(N-2)$. Then (\ref{1d}) remains true with $\tau_1=1, \tau_2=0$.

\medskip
\noindent{\bf Step 4:} {\it Control the averages.}
For any $R>1$ we claim that
\begin{align}\label{22}
\begin{cases}
\int_{R/2}^R\|u(r)\|_1r^{N-1}dr\leq CR^{N-\alpha- \frac{a+bp}{pq-1}},\\ \int_{R/2}^R\|v(r)\|_1r^{N-1}dr\leq CR^{N-\beta-\frac{b+aq}{pq-1}},
\end{cases}
\end{align}

\begin{align}\label{23}
\begin{cases}
\int_{R/2}^R\|D_xu(r)\|_1r^{N-1}dr\leq CR^{N-1-\alpha- \frac{a+bp}{pq-1}},\\
\int_{R/2}^R\|D_xv(r)\|_1r^{N-1}dr\leq CR^{N-1-\beta-\frac{b+aq}{pq-1}},
\end{cases}
\end{align}
\begin{align}\label{24a}
\int_{R/2}^R\|D_x^2u(r)\|^k_k r^{N-1}dr\leq CR^{\frac{a}{p}}F(2R),
\end{align}
\begin{align}\label{24b}
\int_{R/2}^R\|D^2_xv(r)\|^m_m r^{N-1}dr\leq CR^{\frac{b}{q}}F(2R),
\end{align}

\begin{align}\label{25}
\begin{cases}
\int_{R/2}^R\|D_x^2u(r)\|_{1+\varepsilon}^{1+\varepsilon} r^{N-1}dr\leq CR^{N-2-\alpha- \frac{a+bp}{pq-1}+a\varepsilon},\\
\int_{R/2}^R\|D^2_xv(r)\|_{1+\varepsilon}^{1+\varepsilon} r^{N-1}dr\leq CR^{N-2-\beta-\frac{b+aq}{pq-1}+b\varepsilon},
\end{cases}
\end{align}
Estimates (\ref{22}) and (\ref{23}) follow from Lemma \ref{lem}. Let us next prove (\ref{24a}), (\ref{24b}) and (\ref{25}). Indeed,
\begin{align*}
\int_{R/2}^R\|D_x^2u(r)\|^k_k r^{N-1}dr&=\int\limits_{B_R\setminus B_{R/2}}|D_x^2u|^kdx\\
&\leq C\left(\int\limits_{B_{2R}\setminus B_{R}}|\Delta u|^kdx+ R^{-2k}\int\limits_{B_{2R}\setminus B_{R}}u^kdx\right)\\
&= C\left(\int\limits_{B_{2R}\setminus B_{R}}|x|^{ka}v^{p+1}dx+ R^{-2k}\int\limits_{B_{2R}\setminus B_{R}}u^kdx\right)\\
&\leq  C\left(R^{a/p}F(2R)+ R^{-2k}\int\limits_{B_{2R}\setminus B_{R}}u^kdx\right).
\end{align*}
By H\"older's inequality, for $R>1$, we have
\begin{align*}
A_1=R^{-2k}\int\limits_{B_{2R}\setminus B_{R}}u^kdx&\leq CR^{-2k}R^{N(pq-1)/p(q+1)}\left(\int\limits_{B_{2R}\setminus B_{R}}u^{q+1}dx\right)^{(p+1)/p(q+1)}\\
&\leq R^{\eta_1/p}F(2R),
\end{align*}
with $\eta_1=-2(p+1)+N(pq-1)/(q+1)- b(p+1)/(q+1)$, where we used $(p+1)/p(q+1)<1$, along with 
\begin{align*}
F(R)\geq F(1)>0, \quad R>1.
\end{align*}
We show that $\eta_1<a$. Indeed
\begin{align*}
a-\eta_1 &=2(p+1)-N\frac{pq-1}{q+1}+ b\frac{p+1}{q+1}+a\\
&=2(p+1)-N\frac{pq-1}{q+1}+ b\frac{p(q+1)-(pq-1)}{q+1}+a\\
&=\frac{2}{\beta}\left((p+1)\beta-N+ \frac{b}{2}p\beta-b+\frac{a}{2}\beta\right)\\
&=\frac{2}{\beta}\left(2+\alpha+\beta-N+ \frac{b}{2}\alpha+\frac{a}{2}\beta\right).
\end{align*}
Hence (\ref{24a}) holds. The similar argument and Lemma \ref{comparison} imply (\ref{24b}).

On the other hand, by using Lemma \ref{lemell}, \ref{lem4a}, equation (\ref{1}) and the boundedness of $u$, we obtain
\begin{align*}
 \int\limits_{R/2}^R\|D_x^2u(r)\|_{1+\eps}^{1+\eps} r^{N-1}\,dr&=\int\limits_{B_R\setminus B_{R/2}}|D_x^2u|^{1+\eps}\,dx\\
&\leq 
C\int\limits_{B_{2R}\setminus B_{R/4}}|\Delta u|^{1+\eps}\,dx
+CR^{-2(1+\eps)} \int\limits_{B_{2R}\setminus B_{R/4}}u^{1+\eps}\,dx\\
&\leq 
C\int\limits_{B_{2R}\setminus B_{R/4}}|x|^{a\eps}u^{p\eps} |x|^av^p\,dx
+CR^{-2(1+\eps)}\int\limits_{B_{2R}\setminus B_{R/4}}u^{1+\eps}\,dx\\
&\leq 
CR^{a\eps}\int\limits_{B_{2R}\setminus B_{R/4}}|x|^av^pdx
+CR^{-2(1+\eps)}\int\limits_{B_{2R}\setminus B_{R/4}}u\,dx\\
&\leq CR^{N-2-\alpha-\frac{a+bp}{pq-1}+ a\eps}+CR^{N-2-\alpha-\frac{a+bp}{pq-1}-2\eps} \\
&\leq CR^{N-2-\alpha-\frac{a+bp}{pq-1}+ a\eps}.
\end{align*}
By the similar calculation for $v$,  (\ref{25}) holds.

\smallskip
\noindent{\bf Step 5:} {\it measure and feedback argument.} For a given $K>0$, let us define the sets
\begin{align*}
&\Gamma_1(R)=\{r\in (R,2R); \|D^2_xu(r)\|_k^k>KR^{-N+\frac{a}{p}}F(4R)\},\\
&\Gamma_2(R)=\{r\in (R,2R); \|D^2_xv(r)\|_m^m>KR^{-N+\frac{b}{q}}F(4R)\},\\
&\Gamma_3(R)=\{r\in (R,2R); \|D^2_xu(r)\|_{1+\varepsilon}^{1+\varepsilon}>KR^{-2-\alpha-\frac{a+bp}{pq-1}+a\varepsilon}\},\\
&\Gamma_4(R)=\{r\in (R,2R); \|D^2_xv(r)\|_{1+\varepsilon}^{1+\varepsilon}>KR^{-2-\beta-\frac{b+aq}{pq-1}+b\varepsilon}\},\\
&\Gamma_5(R)=\{r\in (R,2R); \|u(r)\|_1>KR^{-\alpha-\frac{a+bp}{pq-1}}\},\\
&\Gamma_6(R)=\{r\in (R,2R); \|v(r)\|_1>KR^{-\beta-\frac{b+aq}{pq-1}}\},\\
&\Gamma_7(R)=\{r\in (R,2R); \|D_xu(r)\|_1>KR^{-1-\alpha-\frac{a+bp}{pq-1}}\},\\
&\Gamma_8(R)=\{r\in (R,2R); \|D_xv(r)\|_1>KR^{-1-\beta-\frac{b+aq}{pq-1}}\}.
\end{align*}
By 
estimate (\ref{24a}) and (\ref{22}), for $R>1$ we have
\begin{align*}
CR^{a/p}F(4R)&\geq \int_{R}^{2R}\|D^2_xu(r)\|^k_kr^{N-1}dr\\
&\geq |\Gamma_1(R)|R^{N-1}KR^{-N+\frac{a}{p}}F(4R)=|\Gamma_1(R)|KR^{-1+\frac{a}{p}}F(4R)
\end{align*}
and 
\begin{align*}
C&\geq R^{-N+\alpha +\frac{a+bp}{pq-1}}\int_{R}^{2R}\|u(r)\|_1r^{N-1}dr\\
&\geq R^{-N+\alpha +\frac{a+bp}{pq-1}} |\Gamma_5(R)|R^{N-1}KR^{-\frac{a+bp}{pq-1}}=|\Gamma_5(R)|KR^{-1}.
\end{align*}
Consequently, $|\Gamma_1|\leq R/10$ and $|\Gamma_5|\leq R/10$ for $K>10C$. Similarly, $|\Gamma_i|\leq R/10,\ i=1,...,8$. Therefore, for each $R\geq 1$, we can find 
\begin{align}
 \tilde{R}\in (R, 2R)\setminus\bigcup_{i=1}^8 \Gamma_i(R)\ne \emptyset.
\end{align}

Let us check that  
\begin{align}
 2+\alpha+\frac{a+bp}{pq-1}> \frac{N}{k}-\frac{a}{pk}\label{10},\\
 2+\beta+\frac{b+aq}{pq-1}> \frac{N}{m}-\frac{b}{qm}.\label{11}
\end{align}
Indeed, by computation
\begin{align*}
M&= (2+\alpha)k+\frac{a+bp}{pq-1}k-N+\frac{a}{p}\\
&=p\beta k+\frac{(a+bp)(p+1)}{p(pq-1)}-N+\frac{a}{p}\\
&=\beta(p+1)+ \frac{(a+bp)}{2p}\alpha -N+\frac{a}{p}\\
&=p\beta+\beta+\frac{a}{2p}\alpha+ \frac{b}{2}\alpha-N+\frac{a}{p}\\
&=\alpha+\beta+2-N+\frac{b}{2}\alpha+\frac{a}{2p}(\alpha+2)\\
&=\alpha+\beta+2-N+\frac{b}{2}\alpha+\frac{a}{2}\beta>0.
\end{align*}
Thus, (\ref{10}) holds. Similarly for (\ref{11}).
Therefore, for $\varepsilon >0$ small enough, we have
\begin{align}
 \frac{1}{1+\varepsilon}(2+\alpha+\frac{a+bp}{pq-1}-a\varepsilon)> \frac{N}{k}-\frac{a}{pk}\label{10a},\\
 \frac{1}{1+\varepsilon}(2+\beta+\frac{b+aq}{pq-1}-b\varepsilon)> \frac{N}{m}-\frac{b}{qm}.\label{11a}
\end{align}

By (\ref{estimG1step2}) and the definition of the sets $\Gamma_i$, we may now control $G_1(\tilde R)$ as follows 
\begin{align*}
\left[R^{-N-b}G_1(\tilde R)\right]^{1/q+1}\leq CR^2\left(R^{-2-\alpha-\frac{a+bp}{pq-1}}+ R^{(-2-\alpha-\frac{a+bp}{pq-1}+a\varepsilon)/(1+\varepsilon}\right)^\nu\\
\times\left(R^{-\frac{N}{k}+\frac{a}{pk}}F^{1/k}(4R)+R^{-2-\alpha-\frac{a+bp}{pq-1}}\right)^{1-\nu}
\end{align*}
Using (\ref{10}) and (\ref{10a}), we obtain
\begin{align}\label{30}
G_1(\tilde{R})\leq C \left(R^{-a_1(0)}+R^{-a_1(\varepsilon)}\right)F^{b_1}(4R)
\end{align}
where
\begin{align*}
&a_1(\varepsilon)=(q+1)\left[\left(\alpha+2+\frac{a+bp}{pq-1}-a\varepsilon\right)\frac{\nu}{1+\varepsilon}+ \left(\frac{N}{k}-\frac{a}{pk}\right)(1-\nu)-2-\frac{N+b}{q+1}\right]\\
&b_1=\frac{(1-\nu)}{k}(q+1).
\end{align*}
On the other hand, it follows from (\ref{1d}), (\ref{10})-(\ref{11a}) that
\begin{align}
 G_2(\tilde{R})\leq C&\left(R^{-\alpha-2 -\frac{a+bp}{pq-1}}+R^{(-2-\alpha-\frac{a+bp}{pq-1}+a\varepsilon)/(1+\varepsilon}\right)^{\tau_1}\notag\\
&\times\left(R^{-\beta-2 -\frac{b+aq}{pq-1}}+R^{(-2-\beta-\frac{b+aq}{pq-1}+b\varepsilon)/(1+\varepsilon}\right)^{\tau_2}\notag\\
&\times\left(R^{-\frac{N}{k}+\frac{a}{pk}}F^{1/k}(4R)+R^{-\alpha-2 -\frac{a+bq}{pq-1}}\right)^{1-\tau_1}\notag\\
&\times\left(R^{-\frac{N}{m}+\frac{b}{qm}}F^{1/m}(4R)+R^{-\beta-2 -\frac{b+aq}{pq-1}}\right)^{1-\tau_2}\notag\\
&\leq C \left(R^{-a_2(0)}+R^{-a_2(\varepsilon)}+R^{-a_3(\varepsilon)}+R^{-a_4(\varepsilon)}\right)F^{b_2}(4R).\label{31}
\end{align}
where
\begin{align*}
a_2(\varepsilon)=&-N-2+\frac{\tau_1}{1+\varepsilon}\left(\alpha+2 +\frac{a+bp}{pq-1}-a\varepsilon\right)+ \tau_2\left(\beta+2 +\frac{b+aq}{pq-1}\right)\notag\\
&+\left(N-\frac{a}{p}\right)\frac{(1-\tau_1)}{k} +
\left(N-\frac{b}{q}\right)\frac{(1-\tau_2)}{m},
\end{align*}
\begin{align*}
a_3(\varepsilon)=&-N-2+\tau_1\left(\alpha+2 +\frac{a+bp}{pq-1}\right)+ \frac{\tau_2}{1+\varepsilon}\left(\beta+2 +\frac{b+aq}{pq-1}-b\varepsilon\right)\notag\\
&+\left(N-\frac{a}{p}\right)\frac{(1-\tau_1)}{k} +
\left(N-\frac{b}{q}\right)\frac{(1-\tau_2)}{m},
\end{align*}
\begin{align*}
a_4(\varepsilon)=&-N-2+\frac{\tau_1}{1+\varepsilon}\left(\alpha+2 +\frac{a+bp}{pq-1}-a\varepsilon\right)+ \frac{\tau_2}{1+\varepsilon}\left(\beta+2 +\frac{b+aq}{pq-1}-b\varepsilon\right)\notag\\
&+\left(N-\frac{a}{p}\right)\frac{(1-\tau_1)}{k} +
\left(N-\frac{b}{q}\right)\frac{(1-\tau_2)}{m},
\end{align*}
\begin{align*}
 b_2=&\frac{1-\tau_1}{k}+\frac{1-\tau_2}{m}.
\end{align*}

Let $\tilde{a}=\min\left(a_i(0), a_j(\varepsilon); \quad i=1,2;\quad j=1,2,3,4\right)$ and $\tilde{b}=\max(b_1, b_2)$. Combining (\ref{30})  and (\ref{31}), we obtain
\begin{align}\label{32}
 F(R)\leq C R^{-\tilde{a}}F^{\tilde{b}}(4R), R\geq 1.
\end{align}
We claim that there exist a constant $M>0$ and a sequence $R_i\to \infty$ such that 
\begin{align*}
 F(4R_i)\leq MF(R_i).
\end{align*}
Assume that the claim is false. Then, for any $M>0$, there exists $R_0>0$ such that $F(4R)\geq MF(R)$ for all $R\geq R_0$. But since $u$ is bounded, we have $F(R)\leq CR^{N+b}$. Thus 
$$M^iF(R_0)\leq F(4^iR_0)\leq C(4^iR_0)^{N+b}= CR_0^{N+b}4^{i(N+b)}, \forall i\geq 0.$$
This is a contradiction for $i$ large if we choose $M>4^{N+b}$.

Now we assume we have proved that $\tilde{a}>0$ and $\tilde{b}<1$, then from (\ref{32}) we have
$$F(4R_i)\leq C R_i^{-\tilde{a}/(1-\tilde{b})}. $$
Letting $i\to \infty$, we obtain $\int_{{\mathbb R}^N}|x^b|u^{q+1}=0$, hence $u\equiv 0\equiv v$: contradiction.

\medskip
\noindent{\bf Step 6:} {\it Fulfillment of the conditions $\tilde{a}>0$ and $\tilde{b}<1$}
\smallskip

{\bf Verification of $b_1<1$.} If $q\leq 2/(N-3)$ then $b_1=0$. If $q>2/(N-3)$ then
\begin{align*}
 1-b_1&=1-p(q+1)A=1-p\left((q+1)\frac{N-3}{N-1}-1\right)=\frac{(N-1)(p+1)-p(q+1)(N-3)}{N-1}\\
&=\frac{2(p+1)-(N-3)(pq-1)}{N-1}=\frac{pq-1}{N-1}(\alpha+3-N).
\end{align*}
Thus, $0\leq b_1<1$.
\smallskip

{\bf Verification of $a_1(0)>0$.}
\begin{align*}
a_1(0)&=(q+1)\left[\alpha+\frac{a+bp}{pq-1}-\frac{N+b}{q+1}- (1-\nu)\frac{1}{k}\left((2+\alpha)k+\frac{a+bp}{pq-1}k-N-\frac{a}{p}\right)\right]\\
&=(q+1)\alpha+ \frac{(q+1)(a+bp)}{pq-1}-N-b-b_1M\\
&= \alpha+\beta+2+\frac{(q+1)(a+bp)}{pq-1}-N-b-b_1M\\
&=M-b_1M=(1-b_1)M.
\end{align*}
Hence $a_1(0)>0$.
\smallskip

{\bf Verification of $a_2(0)>0$ and $b_2<1$.}

{\bf  Case $q>1/(N-2)$.} Here we must ensure the existence of $z\in (1, \infty)$ satisfying (\ref{16}) and (\ref{17}), that is
\begin{align}\label{18}
\max\bigg(\frac{1}{k}-\frac{1}{N-1},\frac{1}{N-1}\bigg)\leq\frac{1}{z}\leq
\min\bigg(1-\frac{1}{N-1}, \frac{1}{q+1}+\frac{1}{N-1}\bigg). 
\end{align}

We have 
\begin{align*}
 b_2=pA_1+qA_2=p\left(\frac{N-2}{N-1}-\frac{1}{z}\right)+q\left(\frac{1}{z}-\frac{1}{N-1}\right)=\frac{p(N-2)-q}{N-1}-\frac{p-q}{z}.
\end{align*}
Hence, there exists $z\in (1,\infty)$ satisfying (\ref{18}) and such that $b_2<1$, if the following 
 hold

\begin{align}
&\frac{1}{k}-\frac{1}{N-1}\leq \frac{1}{q+1}+\frac{1}{N-1},\label{19}\\
&\frac{p(N-2)-q}{N-1}-1<\frac{(N-2)(p-q)}{N-1},\label{20}\\
&\frac{p(N-2)-q}{N-1}-1<(p-q)\left(\frac{1}{q+1}+\frac{1}{N-1}\right).\label{21}
\end{align}
Inequality (\ref{19}) is true by (\ref{1e}). Inequality (\ref{20}) is equivalent to $q<(N-1)/(N-3)$, which is true due to $q\leq p(q+1)/(p+1)=1+(2/\alpha)<(N-1)/(N-3)$. Inequality (\ref{21}) is also true due to $\alpha>N-3$. 

We have
\begin{align*}
a_2(0)=&-N-2+\tau_1\left(\alpha+2 +\frac{a+bp}{pq-1}\right)+ \tau_2\left(\beta+2 +\frac{b+aq}{pq-1}\right)\notag\\
&+\left((2+\alpha)k+\frac{a+bp}{pq-1}k-M \right)\frac{(1-\tau_1)}{k} \\
&+\left((2+\beta)m+\frac{b+aq}{pq-1}m-M\right)\frac{(1-\tau_2)}{m}\\
=&-N-2+2+\alpha+\frac{a+bp}{pq-1}+2+\beta+\frac{b+aq}{pq-1}-M\left(\frac{1-\tau_1}{k}+\frac{1-\tau_2}{m}\right)\\
=& M- Mb_2 = M(1-b_2)>0.
\end{align*}

{\bf  Case $q\leq 1/(N-2)$.} Since $\tau_1=1, \tau_2=0$, we deduce
\begin{align*}
a_2(0)=& -N-2+\alpha+ 2+\frac{a+bp}{pq-1}+\frac{N}{m}-\frac{b}{qm}\\
=&\frac{1}{q+1}\left(-N+(q+1)\alpha+\frac{(q+1)(a+bp)}{pq-1}-\frac{b(pq-1)}{pq-1}\right)\\
=&\frac{1}{q+1}\left(\alpha+\beta+\frac{b}{2}\alpha+\frac{a}{2}\beta-N+2\right)>0
\end{align*}
and also $b_2=1/m<1$. 

Note that $a_2(0)=a_3(0)=a_4(0)$. Thus $a_i(\varepsilon)>0, \ i=1,...,4$ for $\varepsilon$ small enough. Theorem is proved for $N\geq 4$.

\medskip

For $N=3$,  conditions (\ref{40}) and (\ref{50}) are not necessary and the proof becomes much less complicated due to the Sobolev imbedding $W^{2, 1+\varepsilon}\subset L^\infty$ on $S^2$. For sake of clarity, although here $N=3$, we shall keep the letter $N$ in the proof.

\medskip

\noindent{\bf Step 1:} {\it Preparations. } 
Let us choose $a_1, a_2$ satisfying (\ref{choicea1a2}) and set
$$F(R)=\int_{B_R}|x|^au^{q+1}dx+ \int_{B_R}|x|^bv^{p+1}dx.$$
By the Rellich-Pohozaev identity (Lemma \ref{pohozaev}), we have
\begin{align*}
F(R)\leq C\big(G_{11}(R)+G_{12}(R)+G_2(R)\big),
\end{align*}
where 
\begin{align*}
 &G_{11}(R)=R^{N+b}\int\limits_{S^{N-1}}u^{q+1}(R)d\theta,\\
&G_{12}(R)=R^{N+a}\int\limits_{S^{N-1}}v^{p+1}(R)d\theta,\\
&G_2(R)=R^N\int\limits_{S^{N-1}}\left(|D_xu(R)|+R^{-1}u(R) \right)\left(|D_xv(R)|+R^{-1}v(R) \right)d\theta.
\end{align*}

\medskip
\noindent{\bf Step 2:} {\it Estimations of $G_{11}(R), G_{12}(R)$ and $G_2(R)$.}

By Lemma \ref{1b}, since $N=3$, we have 
$$\|u\|_{q+1}\leq \|u\|_\infty\leq C\left(\|D_\theta^2 u\|_{1+\eps}+ \|u\|_1\right)\leq 
C\left(R^2\|D_x^2 u\|_{1+\eps}+ \|u\|_1\right)$$
and
$$\|D_xu\|_2\leq C\left(\|D_\theta D_x u\|_{1+\eps}+\|D_xu\|_1\right)
 \leq C\left(R\|D^2_x u\|_{1+\eps}+\|D_xu\|_1\right).$$
Similarly,
$$\|v\|_{p+1}\leq \|v\|_\infty\leq C\left(\|D_\theta^2 v\|_{1+\eps}+ \|v\|_1\right)\leq 
C\left(R^2\|D_x^2 v\|_{1+\eps}+ \|v\|_1\right)$$
and
$$\|D_xv\|_2\leq C\left(\|D_\theta D_x v\|_{1+\eps}+\|D_xv\|_1\right)
 \leq C\left(R\|D^2_x v\|_{1+\eps}+\|D_xv\|_1\right).$$
Therefore,
\begin{equation}\label{estimG11}
G_{11}(R)\leq CR^{N+b+2(q+1)}\left(\|D_x^2 u\|_{1+\eps}+ R^{-2}\|u\|_1\right)^{q+1},
\end{equation}
\begin{equation}\label{estimG12}
G_{12}(R)\leq CR^{N+a+2(p+1)}\left(\|D_x^2 v\|_{1+\eps}+ R^{-2}\|v\|_1\right)^{p+1}
\end{equation}
and
\begin{equation}\label{estimG2}
G_2(R)\leq CR^{N+2}(\|D^2_xu\|_{1+\eps}+R^{-1}\|D_xu\|_1+ R^{-2}\|u\|_1). (\|D^2_xv\|_{1+\eps}+R^{-1}\|D_xv\|_1+ R^{-2}\|v\|_1)
\end{equation}

\medskip

\medskip
\noindent{\bf Step 3:} {\it Conclusion.} 
We can find 
\begin{equation}\label{existRtilda}
\tilde{R}\in (R, 2R)\setminus\bigcup_{i=3}^{8} \Gamma_i(R)\ne \emptyset,
\end{equation}
where the sets $\Gamma_i$ are defined in Step 4 of the proof of the case $N\ge 4$.
If follows from (\ref{estimG11})-(\ref{estimG2}) in Step 2 and (\ref{existRtilda}) in Step 3 that 
\begin{align*}
G_{11}(\tilde{R})
&\leq CR^{N+b+2(q+1)}\left(R^{(-2-\alpha-\frac{a+bp}{pq-1}+a\varepsilon)/(1+\eps)}+R^{-2-\alpha-\frac{a+bp}{pq-1}}\right)^{q+1}\\
&\leq C\left( R^{-c_1(\eps)}+R^{-c_1(0)}\right),
\end{align*}
where
\begin{align*}
c_1(\eps)=(q+1)\left[\left(2+\alpha+\frac{a+bp}{pq-1}-a\varepsilon\right)\frac{1}{1+\eps}-2-\frac{N+b}{q+1}\right].
\end{align*}
Similarly
\begin{align*}
G_{12}(\tilde{R})
&\leq CR^{N+a+2(p+1)}\left(R^{(-2-\beta-\frac{b+aq}{pq-1}+b\varepsilon)/(1+\eps)}+R^{-2-\beta-\frac{b+aq}{pq-1}}\right)^{p+1}\\
&\leq C\left( R^{-c_2(\eps)}+R^{-c_2(0)}\right),
\end{align*}
where
\begin{align*}
c_2(\eps)=(p+1)\left[\left(2+\beta+\frac{b+aq}{pq-1}-b\varepsilon\right)\frac{1}{1+\eps}-2-\frac{N+a}{p+1}\right],
\end{align*}
and
\begin{align*} 
 G_2(\tilde{R})&\leq CR^{N+2}\left(R^{(-2-\alpha-\frac{a+bp}{pq-1}+a\varepsilon)/(1+\eps)}+R^{-2-\alpha-\frac{a+bp}{pq-1}}\right)\\
&\qquad\times\left(R^{(-2-\beta-\frac{b+aq}{pq-1}+b\varepsilon)/(1+\eps)}+R^{-2-\beta-\frac{b+aq}{pq-1}}\right) \\
&\leq C\left( R^{-c_3(\eps)}+R^{-c_3(\eps)}+R^{-c_4(\eps)}+R^{-c_3(0)}\right), 
\end{align*}
where
\begin{align*}
&c_3(\eps)=-N-2+\frac{1}{1+\eps}\left(2+ \alpha+\frac{a+bp}{pq-1}-a\varepsilon\right)+\frac{1}{1+\eps}\left(2+ \beta+\frac{b+aq}{pq-1}-b\varepsilon\right),\\
&c_4(\eps)=-N-2+\frac{1}{1+\eps}\left(2+ \alpha+\frac{a+bp}{pq-1}-a\varepsilon\right)+2+ \beta+\frac{b+aq}{pq-1},\\
&c_5(\eps)=-N-2+\frac{1}{1+\eps}\left(2+ \beta+\frac{b+aq}{pq-1}-b\varepsilon\right)+2+ \alpha+\frac{a+bp}{pq-1}.
\end{align*}

Letting $\tilde{c}=\min\left(c_i(\eps), c_j(0); i=1,...,5,j=1,...,3\right)$, we obtain
$$F(R)\leq F(\tilde R)\leq C R^{-\tilde{c}}, \ \ R\geq 1.$$
By straightforward computation, we see that 
$$c_i(0)>0, \ i=1,...,5.$$ 
Therefore, for $\eps>0$ small enough,
we have $\tilde{c}>0$, so that
 $\int_{{\mathbb R}^N}\left(|x|^au^{p+1}+|x|^bv^{p+1}\right)dx=0$, hence $u\equiv v \equiv 0$: a contradiction.
The proof is complete. 
\qed

\section{Applications: Singularity and decay estimates and a priori bound}
\subsection{Singularity and decay estimates}
We now prove 
Theorem \ref{th3}. We need the following lemma.
\begin{lemma}\label{lem3}
 Assume $pq>1$, $p\geq q$, (\ref{hyper}) and  (\ref{50}).  
 Assume in addition that $c, d \in C^\gamma(\overline{B}_1)$ for some $\gamma \in (0,1]$ and 
\begin{align}\label{boundc}
\|c\|_{C^\gamma(\overline{B}_1)}\leq C_1,\; \|d\|_{C^\gamma(\overline{B}_1)}\leq C_1\; \text{ and } \; c(x)\geq C_2,\; d(x)\geq C_2,\; x\in \overline{B}_1,
\end{align}
for some constants $C_1, C_2>0$.
There exists a constant $C$, depending only on $\gamma, C_1, C_2, p, q, N$, such that, for
any nonnegative classical solution $(u,v)$ of
 \begin{align}\label{eqcup}
\begin{cases}
-\Delta u=c(x)v^p, \quad x\in B_1\\
-\Delta v=d(x)u^q, \quad x\in B_1
\end{cases}
\end{align}
$(u,v)$ satisfies 
\begin{align}
 |u(x)|^{\frac{1}{\alpha }}+|v(x)|^{\frac{1}{\beta }}
 +|\nabla u(x)|^{\frac{1}{\alpha+1}}+|\nabla v(x)|^{\frac{1}{\beta+1}}
\leq C\bigl(1+{\rm dist}^{-1}(x,\partial B_1)\bigr),\quad x\in B_1.
\end{align}

\end{lemma}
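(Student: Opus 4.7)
The plan is to argue by contradiction via a doubling–rescaling argument in the spirit of \cite{PQS07}, ultimately contradicting the Liouville theorem for the Lane-Emden system in $\mathbb{R}^N$ due to Souplet \cite{Sou09}, which is available exactly under $pq>1$, $p\geq q$, (\ref{hyper}), and (\ref{50}). Introduce the scale-invariant quantity
\[
M(x) := u(x)^{1/\alpha}+v(x)^{1/\beta}+|\nabla u(x)|^{1/(\alpha+1)}+|\nabla v(x)|^{1/(\beta+1)},
\]
and suppose the conclusion fails. Then there exist coefficient sequences $c_k,d_k$ satisfying (\ref{boundc}), solutions $(u_k,v_k)$ of (\ref{eqcup}), and points $x_k\in B_1$ with $M_k(x_k)\bigl(1+\mathrm{dist}^{-1}(x_k,\partial B_1)\bigr)^{-1}\to\infty$. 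I would then apply the doubling lemma \cite[Lemma~5.1]{PQS07} to $M_k$ on $B_1$ with parameter $k$ to produce points $y_k\in B_1$ such that $M_k(y_k)\geq M_k(x_k)$, $M_k(y_k)\,\mathrm{dist}(y_k,\partial B_1)\geq 2k$, and $M_k(z)\leq 2M_k(y_k)$ whenever $|z-y_k|\leq k/M_k(y_k)$.

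Set $\lambda_k:=M_k(y_k)^{-1}\to 0$ and define
\[
\tilde u_k(y) := \lambda_k^{\alpha}\,u_k(y_k+\lambda_k y), \qquad \tilde v_k(y) := \lambda_k^{\beta}\,v_k(y_k+\lambda_k y).
\]
The exponent identities $\alpha+2=p\beta$ and $\beta+2=q\alpha$ (immediate from the definitions of $\alpha,\beta$) imply that $(\tilde u_k,\tilde v_k)$ solves
\[
-\Delta\tilde u_k = c_k(y_k+\lambda_k y)\,\tilde v_k^{\,p},\qquad -\Delta\tilde v_k = d_k(y_k+\lambda_k y)\,\tilde u_k^{\,q}
\]
on $B(0,k)$, while the rescaled majorant $\tilde M_k(y)=\lambda_k M_k(y_k+\lambda_k y)$ satisfies $\tilde M_k(0)=1$ and $\tilde M_k\leq 2$ on $B(0,k)$. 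In particular $\tilde u_k,\tilde v_k$ are uniformly bounded on every compact subset of $\mathbb{R}^N$, and the condition $\mathrm{dist}(y_k,\partial B_1)/\lambda_k\geq 2k$ guarantees that the rescaled balls really fit inside the rescaled domain.

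The next step is to pass to the limit. The $C^\gamma$ bound in (\ref{boundc}) combined with $\lambda_k\to 0$ forces $c_k(y_k+\lambda_k\,\cdot\,)$ and $d_k(y_k+\lambda_k\,\cdot\,)$ to converge, along a subsequence, locally uniformly to constants $c_\infty,d_\infty\in[C_2,C_1]$. Interior $C^{2,\gamma}$ elliptic regularity then yields $C^2_{\mathrm{loc}}(\mathbb{R}^N)$ subsequential convergence $(\tilde u_k,\tilde v_k)\to(u,v)$, where $(u,v)$ is nonnegative, bounded, and solves $-\Delta u=c_\infty v^p$, $-\Delta v=d_\infty u^q$ on $\mathbb{R}^N$ with $M(u,v)(0)=1$, so it is not identically zero. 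A linear rescaling of $(u,v)$ absorbs the constants $c_\infty,d_\infty$ and delivers a nonnegative bounded nontrivial solution of the standard Lane-Emden system on $\mathbb{R}^N$. The strong maximum principle and the Liouville theorem for bounded harmonic functions upgrade this limit to a \emph{positive} solution (if $u\equiv 0$ then $v$ is bounded harmonic, hence constant, and the other equation forces $v\equiv 0$, contradicting $M(0)=1$). This positive bounded solution of the Lane-Emden system in $\mathbb{R}^N$ contradicts Souplet's Liouville theorem \cite{Sou09} under (\ref{hyper}) and (\ref{50}), completing the argument.

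I expect the main obstacle to be less conceptual than organizational: one must carefully match the scaling exponents to the nonlinearities (this reduces to $\alpha+2=p\beta$ and $\beta+2=q\alpha$, which are built into the definition of $\alpha,\beta$), verify that the doubling lemma delivers points sufficiently far from $\partial B_1$ at the correct scale, and exploit the $C^\gamma$ regularity to pass to \emph{constant} coefficients in the limit. The strict positivity $c,d\geq C_2>0$ is crucial so that the limiting system is nondegenerate and the Liouville obstruction genuinely applies; without a modulus of continuity on $c,d$ one could not even guarantee that the limiting coefficients are constant.
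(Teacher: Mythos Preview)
Your proposal is correct and follows essentially the same doubling--rescaling strategy as the paper: contradict the conclusion, apply the doubling lemma of \cite{PQS07} to the scale-invariant quantity $M$, rescale by $\lambda_k=M_k(y_k)^{-1}$, use the $C^\gamma$ bound together with $\lambda_k\to 0$ to force the rescaled coefficients to converge to positive constants, pass to the limit via interior elliptic estimates, and contradict the Lane--Emden Liouville theorem of \cite{Sou09}. Your write-up is in fact slightly more careful than the paper's in two respects: you explicitly absorb the limiting constants $c_\infty,d_\infty$ into the Lane--Emden system by a linear rescaling, and you verify via the strong maximum principle that the nontrivial nonnegative limit is actually positive before invoking \cite{Sou09}.
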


\begin{proof} 
Arguing by contradiction, we suppose that there exist 
 sequences $c_k, d_k, u_k, v_k$
verifying (\ref{boundc}), (\ref{eqcup}) and points $y_k$, such that the functions 
$$M_k=|u|^{\frac{1}{\alpha }}+|v|^{\frac{1}{\beta }}
+|\nabla u|^{\frac{1}{\alpha+1}}+|\nabla v|^{\frac{1}{\beta+1}}$$
satisfy
$$M_k(y_k)>2k\bigl(1+{\rm dist}^{-1}(y_k,\partial B_1)\bigr)\geq 2k\,{\rm dist}^{-1}(y_k,\partial B_1).$$
By the Doubling Lemma in \cite[Lemma 5.1]{PQS07}, there exists $x_k$ such that
$$M_k(x_k)\geq M_k(y_k),\quad M_k(x_k)>2k\,{\rm dist}^{-1}(x_k,\partial B_1),$$
and 
\begin{equation}\label{ineqMk}
M_k(z)\leq 2M_k(x_k), \quad\hbox{ for all $z$ such that } |z-x_k|\leq kM_k^{-1}(x_k).
\end{equation}
We have 
\begin{equation}\label{convlambda}
\lambda_k:=M_k^{-1}(x_k)\to 0,\quad k\to\infty,
\end{equation}
due to $M_k(x_k)\geq M_k(y_k)>2k$. 

Next we let 
$$\tilde{u}_k=\lambda_k^{\alpha}u_k(x_k+\lambda_k y),\;\tilde{v}_k=\lambda_k^{\beta}v_k(x_k+\lambda_k y),\quad \tilde c_k(y)=c_k(x_k+\lambda_k y), \; \tilde d_k(y)=d_k(x_k+\lambda_k y).$$
We note that $|\tilde{u}_k(0)|^{\frac{1}{\alpha}}+|\tilde{v}_k(0)|^{\frac{1}{\beta}}
+|\nabla \tilde{u}_k(0)|^{\frac{1}{\alpha+1}}+|\nabla \tilde{v}_k(0)|^{\frac{1}{\beta+1}}=1$, 
\begin{equation}\label{boundvk}
\left[|\tilde{u}_k|^{\frac{1}{\alpha}}+|\tilde{v}_k|^{\frac{1}{\beta}}\right](y)\leq 2, \quad |y|\leq k,
\end{equation}
due to (\ref{ineqMk}), and we see that $(\tilde{u}_k, \tilde{v}_k)$ satisfies
\begin{equation}\label{eqnvk}
\begin{cases}
 -\Delta \tilde{u}_k=\tilde c_k(y)\tilde{v}_k^p, \quad |y|\leq k,\\
-\Delta \tilde{v}_k=\tilde d_k(y)\tilde{u}_k^q, \quad |y|\leq k.
\end{cases}
\end{equation}

On the other hand, due to (\ref{boundc}), we have $C_2\leq \tilde c_k, \tilde d_k\leq C_1$ and, for each $R>0$ and $k\geq k_0(R)$ large enough,
\begin{equation}\label{boundAscolic}
\begin{cases}
 |\tilde c_k(y)-\tilde c_k(z)|\leq C_1|\lambda_{k}(y-z)|^\alpha\leq C_1|y-z|^\alpha,\quad |y|,|z|\leq R,\\
 |\tilde d_k(y)-\tilde d_k(z)|\leq C_1|\lambda_{k}(y-z)|^\alpha\leq C_1|y-z|^\alpha,\quad |y|,|z|\leq R.
\end{cases}
\end{equation}
Therefore, by Ascoli's theorem, there exists $\tilde c, \tilde d$ in $C({\mathbb R}^N)$  such that, 
after extracting a subsequence,
$(\tilde c_k, \tilde d_k)\to (\tilde c, \tilde d)$ in $C_{loc}({\mathbb R}^N)$. 
Moreover, (\ref{boundAscolic}) and (\ref{convlambda}) imply that $|\tilde c_k(y)-\tilde c_k(z)|\to 0$ as $k\to\infty$, so that the function $\tilde c$ is actually a constant $C\geq C_2$. Similarly, $\tilde d$ is actually a constant $D\geq C_2$.

Now, for each $R>0$ and $1<q<\infty$, by (\ref{eqnvk}), (\ref{boundvk}) and interior elliptic $L^q$ estimates, 
the sequence $(\tilde u_k, \tilde v_k)$ is uniformly bounded in $W^{2+\gamma,q}(B_R)$.  
Using standard imbeddings, after extracting a subsequence,  
we may assume that $(\tilde u_k, \tilde v_k)\to (\tilde u, \tilde v)$ in $C^2_{loc}({\mathbb R}^N)$.
It follows that $(\tilde u, \tilde v)$  is a nonnegative classical solution of
\begin{align*}
 \begin{cases}
  -\Delta \tilde u= C\tilde v^p, \quad y\in  {\mathbb R}^N,\\
-\Delta \tilde v= D\tilde u^q, \quad y\in  {\mathbb R}^N,
 \end{cases}
\end{align*}
and $\tilde u^{\frac{1}{\alpha}}(0)+\tilde v^{\frac{1}{\beta}}(0)
+|\nabla \tilde{u}(0)|^{\frac{1}{\alpha+1}}+|\nabla \tilde{v}(0)|^{\frac{1}{\beta+1}}=1$.
This contradicts the Liouville-type result of Lane-Emden system in \cite{Sou09}
and concludes the proof.
\end{proof}

In addition to Theorem \ref{th3}, we shall at the same time prove the following, corresponding gradient estimates,
which will be useful in the proof of Theorem \ref{cond2}.

\begin{proposition}\label{th3B} 
Under the assumptions of Theorem \ref{th3}, $(u,v)$ also satisfies the estimates
\begin{align}
|\nabla u(x)|\leq C|x|^{-\alpha -1-\frac{a+bp}{pq-1}}, 
\quad
|\nabla v(x)|\leq C|x|^{-\beta -1 -\frac{b+aq}{pq-1}},
\label{sing1B}
\end{align}
for $0<|x|<\rho/2$ (resp. $|x|>2\rho$).
\end{proposition}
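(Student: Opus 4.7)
The plan is to observe that the conclusion of Lemma~\ref{lem3} already contains gradient information: the estimate it delivers involves $|\nabla u|^{1/(\alpha+1)}+|\nabla v|^{1/(\beta+1)}$, not just $|u|^{1/\alpha}+|v|^{1/\beta}$. Consequently, the very same rescaling argument used to deduce Theorem~\ref{th3} from Lemma~\ref{lem3} will simultaneously yield the gradient estimates (\ref{sing1B}) at no additional cost. Nothing new beyond the proof of Theorem~\ref{th3} will be required.

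More precisely, fix $x_0$ with $R:=|x_0|$ satisfying $0<R<\rho/2$ in case (i), respectively $R>2\rho$ in case (ii), and set $\mu:=\alpha+\frac{a+bp}{pq-1}$, $\nu:=\beta+\frac{b+aq}{pq-1}$. Define the rescaled pair
\begin{equation*}
\tilde u(y):=R^{\mu}u(Ry),\qquad \tilde v(y):=R^{\nu}v(Ry).
\end{equation*}
Using the identities $p\beta=\alpha+2$ and $q\alpha=\beta+2$, a direct computation shows
\begin{equation*}
-\Delta\tilde u=|y|^{a}\tilde v^{p},\qquad -\Delta\tilde v=|y|^{b}\tilde u^{q}
\end{equation*}
on the annulus $A:=\{1/4\le |y|\le 7/4\}$, which sits inside the rescaled domain for the chosen range of $R$. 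The point is that on $A$ the coefficients $c(y):=|y|^a$ and $d(y):=|y|^b$ are smooth, have $C^{\gamma}$-norms bounded by a constant depending only on $a,b$, and are bounded above and below by positive constants independent of $R$.

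Let $y_0:=x_0/R$, so $|y_0|=1$ and $B_{1/2}(y_0)\subset A$. Translating $y_0$ to the origin and dilating $B_{1/2}(y_0)$ onto the unit ball, Lemma~\ref{lem3} applies to the resulting problem on $B_1$ and, evaluated at the image of $y_0$ (which lies at distance $1$ from $\partial B_1$), gives
\begin{equation*}
|\nabla\tilde u(y_0)|^{1/(\alpha+1)}+|\nabla\tilde v(y_0)|^{1/(\beta+1)}\le C,
\end{equation*}
with $C$ depending only on $N,p,q,a,b$. Since $\nabla_y\tilde u(y_0)=R^{\mu+1}\nabla_x u(x_0)$ and $\nabla_y\tilde v(y_0)=R^{\nu+1}\nabla_x v(x_0)$, unwinding the rescaling delivers
\begin{equation*}
|\nabla u(x_0)|\le C\,|x_0|^{-\alpha-1-\frac{a+bp}{pq-1}},\qquad |\nabla v(x_0)|\le C\,|x_0|^{-\beta-1-\frac{b+aq}{pq-1}},
\end{equation*}
which is exactly (\ref{sing1B}). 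There is no genuine obstacle: the analytic content is entirely contained in Lemma~\ref{lem3}, and the only verifications needed are the covariance of the system under the rescaling (routine, via the $\alpha,\beta$ identities) and the uniform $C^\gamma$-control of $|y|^a,|y|^b$ on the annulus $A$.
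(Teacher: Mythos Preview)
Your proposal is correct and takes essentially the same approach as the paper: both observe that Lemma~\ref{lem3} already encodes gradient control, rescale so as to land on a unit ball with coefficients uniformly bounded above and below, apply the lemma at the center, and unwind. The only cosmetic difference is that the paper performs the rescaling in a single affine step $y\mapsto x_0+\tfrac{|x_0|}{2}\,y$ (so that $y=0$ corresponds directly to $x_0$ and no auxiliary annulus $A$ is needed), whereas you first dilate by $|x_0|$ and then translate and dilate again.
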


{\it Proof of Theorem \ref{th3} and Proposition \ref{th3B}.} 
Assume either $\Omega=\{x\in{\mathbb R}^N;\, 0<|x|<\rho\}$ and $0<|x_0|<\rho/2$,
or $\Omega=\{x\in{\mathbb R}^N;\, |x|>\rho\}$ and $|x_0|>2\rho$.
Let $R_0=|x_0|/2> 0$.
We rescale $(u,v)$ by setting
\begin{align*}
U(y)=R_0^{\alpha+\frac{a+bp}{pq-1}}u(x_0+R_0y), V(y)=R_0^{\beta+\frac{b+aq}{pq-1}}v(x_0+R_0y).
\end{align*}  
Then $(U,V)$ is solution of 
\begin{align*}
\begin{cases}
-\Delta U=c(y)V^p,y\in B(0,1)\\
-\Delta V=d(y)U^q, y\in B(0,1).
\end{cases}
\end{align*}
where 
\begin{align*}
c(y)=|y+ \frac{x_0}{R_0}|^a, \quad d(y)= |y+ \frac{x_0}{R_0}|^b.
\end{align*}
Notice that $|y+ \frac{x_0}{R_0}| \in [1,3]$, $\forall y\in \overline B(0,1)$. Moreover $\|c\|_{C^1(\overline B_1)}\leq C(a)$ and  $\|d\|_{C^1(\overline B_1)}\leq C(b)$, then applying Lemma \ref{lem3} we have 
$U(0)+V(0)+|\nabla U(0)|+|\nabla V(0)|\leq C$. Hence
$$u(x_0)\leq CR_0^{-\alpha-\frac{a+bp}{pq-1}},\qquad v(x_0)\leq CR_0^{-\beta-\frac{b+aq}{pq-1}}$$
and
$$|\nabla u(x_0)|\leq CR_0^{-\alpha-1-\frac{a+bp}{pq-1}},\qquad |\nabla v(x_0)|\leq CR_0^{-\beta-1-\frac{b+aq}{pq-1}}.$$
The announced results are proved.
\qed

\medskip

\subsection{A priori bound}
{\it Proof of Theorem \ref{th4}.}
Suppose that Theorem \ref{th4} is false. Let $d=\text{dist}(0,\partial\Omega)>0$. Due to 
estimate (\ref{sing1}) in Theorem~\ref{th3}, all solutions of (\ref{bound}) are uniformly bounded, away from $\{0\}\cup \partial \Omega$. Then there are only two following possibilities.

{\bf Case 1:} There exists sequence of solutions $(u_k, v_k)$ and a sequence of points $P_k\to P\in \partial \Omega$ such that 
\begin{align}\label{61a}
N_k=\sup_{x\in\Omega:\text{dist}(x,\partial\Omega)<d/2}\left(u^{\frac{1}{\alpha}}_k(x)
+v^{\frac{1}{\beta}}_k(x)\right)
=u^{\frac{1}{\alpha}}_k(P_k)
+v^{\frac{1}{\beta}}_k(P_k)\to \infty \text{ as } k\to \infty.
\end{align}
 We rescale solution according to
\begin{align*}
 U_k(y)=\lambda_k^\alpha U(P_k+\lambda_k y), \quad V_k(y)=\lambda_k^\beta V(P_k+\lambda_k y); \quad \lambda_k=N_k^{-1}
\end{align*}
then 
\begin{align*}
\begin{cases}
-\Delta U_k=|P_k+\lambda_k y|^aV_k^p,\\
-\Delta V_k=|P_k+ \lambda_ky|^bU_k^q.
\end{cases}
\end{align*}

By the argument similar to that in \cite{GS81b}, there exists $\ell_1, \ell_2>0$ and
 functions $U,V$ solving the following problem in the half-space
\begin{align*}
\begin{cases}
- \Delta U= \ell_1 V^p, \quad x\in H^N_s\\
- \Delta V= \ell_2 U^q, \quad x\in H^N_s\\
 U(x)=V(x)=0, \quad x\in \partial H^N_s\\
U^{\frac{1}{\alpha}}(0)+V^{\frac{1}{\beta}}(0)=1,
\end{cases}
\end{align*}
where $H^N_s:=\{y\in {\mathbb R}^N: y_1>-s\}$ for some $s>0$. 
In view of assumption (\ref{hyper}), this contradicts the Liouville-type result 
of \cite[Theorem~4.2]{PQS07} for  the Lane-Emden system in a half-space.

\smallskip

{\bf Case 2:} There exists a sequence of solutions $(u_k, v_k)$ and a sequence of points $P_k\to 0\in \Omega$ such that
\begin{align*}
M_k=\sup_{|x|<d/2}\left(u^{\frac{1}{\alpha+\frac{a+bp}{pq-1}}}_k(x)
+v^{\frac{1}{\beta+\frac{b+aq}{pq-1}}}_k(x)\right)
=u^{\frac{1}{\alpha+\frac{a+bp}{pq-1}}}_k(P_k)
+v^{\frac{1}{\beta+\frac{b+aq}{pq-1}}}_k(P_k)\to \infty \text{ as } k\to \infty.
\end{align*}
We denote by
\begin{align*}
U_k(y)=\lambda_k^{\alpha+\frac{a+bp}{pq-1}}u_k(P_k+\lambda_ky), V_k(y)=\lambda_k^{\beta+\frac{b+aq}{pq-1}}v_k(P_k+\lambda_ky),\quad \lambda_k=M_k^{-1}.
\end{align*}
Then $(U_k,V_k)$ is solution to 
\begin{align}\label{62}
\begin{cases}
-\Delta U_k=|y+ \frac{P_k}{\lambda_k}|^aV_k^p,y\in B(0,\frac{d}{2\lambda_k})\\
\noalign{\vskip 1mm}
-\Delta V_k=|y+\frac{P_k}{\lambda_k}|^bU_k^q, y\in B(0,\frac{d}{2\lambda_k}).
\end{cases}
\end{align}
Moreover, it follows from estimate (\ref{sing1}) in Theorem \ref{th3}. that the sequence $\lambda_k^{-1}|P_k|=|P_k|M_k$ is bounded.
We may thus assume that $\lambda_k^{-1} P_k \to x_0$ as $k\to \infty$.

From (\ref{62}), by using the elliptic estimates and standard imbeddings, we deduce that some subsequence of $(U_k, V_k)$ converges in $C_{\text{loc}}({\mathbb R}^N)$ to a solution $(U,V)$ in ${\mathbb R}^N$  of the following system
\begin{align*}
\begin{cases}
-\Delta U=|y+ x_0|^aV^p,\ \ y\in {\mathbb R}^N\\
\noalign{\vskip 1mm}
-\Delta V=|y+x_0|^bU^q,\ \  y\in {\mathbb R}^N .
\end{cases}
\end{align*}
with 
$$U^ {\frac{1}{\alpha+\frac{a+bp}{pq-1}}}(0)+V^ {\frac{1}{\beta+\frac{b+aq}{pq-1}}}(0)=1.$$
After a space shift, this gives a contradiction with Theorem~\ref{cond2}.
\qed

\medskip

\section{Proof of Theorem \ref{cond2}.}
Let $(u,v)$ be a positive solution of system (\ref{1}). 
By the Rellich-Pohozaev identity (Lemma~\ref{pohozaev})  with (\ref{choicea1a2}), we have
\begin{align}
\int_{B_R}|x|^av^{p+1}\, dx
&+\int_{B_R}|x|^bu^{q+1}\, dx  \notag\\
&\leq H(R):=CR^{N+a}\int\limits_{S^{N-1}}v^{p+1}(R,\theta)\,d\theta 
+CR^{N+b}\int\limits_{S^{N-1}}u^{q+1}(R,\theta)\,d\theta  \notag\\
&+CR^N\int\limits_{S^{N-1}}\left(|D_xu(R,\theta)|+R^{-1}u(R,\theta) \right)
\left(|D_xv(R,\theta)|+R^{-1}v(R,\theta) \right)\,d\theta.  \notag
\end{align}
Now, by Theorem~\ref{th3} and Proposition \ref{th3B}, for $x\neq 0$, we have
$$u(x)\leq C|x|^{-\alpha-\frac{a+bp}{pq-1}},\qquad v(x)\leq C|x|^{-\beta-\frac{b+aq}{pq-1}}$$
and
$$|\nabla u(x)|\leq C|x|^{-\alpha-1-\frac{a+bp}{pq-1}},\qquad |\nabla v(x)|\leq C|x|^{-\beta-1-\frac{b+aq}{pq-1}}.$$
By straightforward calculations, it follows that 
$$H(R)\leq CR^{N-2-(1+\frac{b}{2})\alpha-(1+\frac{a}{2})\beta} \to 0, \text{ as } R\to \infty, $$ 
due to (\ref{1a}) (which is equivalent to (\ref{la1})). Therefore, $u\equiv v\equiv 0$.
\qed

\medskip

\section{Appendix}

We start with the following simple Lemma.

\begin{lemma}\label{lemDistrib}
Let $a,b>-2$, $pq>1$, $N\geq 3$, $0\in \Omega$ and $(u,v)$ be positive solution of (\ref{1}).
Then:
\begin{equation}\label{estimGrad}
\hbox{ There exists a sequence $\eps=\eps_i\to 0$ such that }
\int\limits_{|x|=\eps_i} \bigl( |\nabla u|^2+|\nabla v|^2\bigr)  \,d\sigma_{\eps_i}\to 0.
\end{equation}
Moreover, $(u,v)$ is a distributional solution of (\ref{1}).
\end{lemma}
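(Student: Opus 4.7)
The plan is to establish the distributional identity via a standard cutoff argument near the origin, and to handle the boundary gradient integrals via a rescaling-and-compactness argument that exploits the continuity of $u,v$ at $0$ together with the assumption $a,b>-2$.

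\textbf{Distributional identity.} Fix $\phi \in C_c^\infty(\Omega)$ and a smooth radial cutoff $\zeta_\varepsilon$ with $\zeta_\varepsilon \equiv 0$ on $B_\varepsilon$, $\zeta_\varepsilon \equiv 1$ outside $B_{2\varepsilon}$, $|\nabla\zeta_\varepsilon| \leq C/\varepsilon$ and $|\Delta\zeta_\varepsilon| \leq C/\varepsilon^2$. Since $\phi\zeta_\varepsilon$ has support away from $0$, the classical identity $-\Delta u = |x|^a v^p$ can be integrated by parts twice to yield
\[ \int u\,(-\Delta(\phi\zeta_\varepsilon))\,dx = \int |x|^a v^p\,\phi\zeta_\varepsilon\,dx. \]
Continuity of $u,v$ at $0$ gives boundedness near $0$, and $|x|^a \in L^1_{loc}$ (since $a>-2>-N$), so dominated convergence handles the principal terms as $\varepsilon\to 0$. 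The cross-terms $\int u\,\nabla\phi\cdot\nabla\zeta_\varepsilon$ and $\int u\phi\Delta\zeta_\varepsilon$ are bounded by $C\varepsilon^{N-1}$ and $C\varepsilon^{N-2}$ respectively, and vanish for $N\geq 3$. The identical argument applies to $v$.

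\textbf{Boundary gradient integrals.} Fix any sequence $\varepsilon_i \to 0$ and set $u_i(y) := u(\varepsilon_i y)$ on the annulus $A = \{1/2 < |y| < 2\}$. The rescaled equation reads
\[ -\Delta u_i(y) = \varepsilon_i^{2+a}\,|y|^a\, v^p(\varepsilon_i y), \]
whose right-hand side has $L^\infty(A)$-norm at most $C\varepsilon_i^{2+a} \to 0$ (using $a>-2$ and boundedness of $v$ near $0$), while $\|u_i\|_{L^\infty(A)}$ is uniformly bounded. Interior $L^p$-estimates for any $p>N$, combined with the Morrey embedding, yield a uniform $C^{1,\alpha}$ bound on $u_i$ over $A' \Subset A$. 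Ascoli's theorem then makes $\{u_i\}$ precompact in $C^1$ on a neighborhood of $S^{N-1}$; any subsequential limit $u_\infty$ is bounded harmonic there (by distributional passage to the limit of the vanishing right-hand side) and agrees pointwise with $u(0)$ by continuity of $u$ at $0$, hence equals the constant $u(0)$. Since every $C^1$-limit is the same constant, the full sequence converges, and $\nabla u_i \to 0$ uniformly on $S^{N-1}$. Consequently,
\[ \int_{|x|=\varepsilon_i}|\nabla u|^2\,d\sigma_{\varepsilon_i} = \varepsilon_i^{N-3}\int_{S^{N-1}}|\nabla u_i|^2\,d\theta \;\longrightarrow\; 0, \]
the prefactor $\varepsilon_i^{N-3}$ being bounded for $N \geq 3$. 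The identical reasoning applied to $v_i(y) := v(\varepsilon_i y)$ (using $b>-2$) yields the corresponding statement for $v$ along the same $\varepsilon_i$.

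The trickiest point is identifying the blow-up limit $u_\infty$ as a constant: harmonicity alone would not suffice, since a generic bounded harmonic function has nonzero gradient, but continuity of $u$ at the origin pins down $u_\infty(y) = u(0)$ for every $y$ and so forces $\nabla u_\infty \equiv 0$. This is essential especially in the borderline case $N=3$, where the scaling factor $\varepsilon_i^{N-3}=1$ provides no decay and the convergence rides entirely on the uniform vanishing of $\nabla u_i$.
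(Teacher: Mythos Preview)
Your proof is correct and follows a genuinely different route from the paper's. For the gradient surface integral, the paper argues via global integrability: elliptic $L^k$ regularity (using $|a|,|b|<2$ to place $|x|^av^p,\,|x|^bu^q$ in $L^k_{loc}$ for $k$ up to $N/2$) combined with Sobolev embedding yields $|\nabla u|,|\nabla v|\in L^N_{loc}$; H\"older then gives $\int_{B_\eps}(|\nabla u|^2+|\nabla v|^2)\,dx\leq C\eps^{N-2}\|\nabla u\|_{L^N(B_\eps)}^2=o(\eps)$, and an averaging argument extracts the sequence $\eps_i$. Your rescaling--compactness argument instead works on a fixed annulus and uses continuity of $u,v$ at $0$ to pin down the blow-down limit as a constant; this actually yields the stronger conclusion that the surface integral tends to zero along \emph{every} sequence $\eps_i\to 0$, not just along a well-chosen one. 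For the distributional identity, the paper integrates by parts on $B_R\setminus B_\eps$ via Green's formula and sends $\eps\to 0$ along the special sequence from \eqref{estimGrad}, whereas your cutoff argument is self-contained and decoupled from the gradient estimate. Both proofs are short; the paper's is more in the spirit of classical regularity theory, while yours isolates precisely the two ingredients at play (continuity at the origin and $a,b>-2$) and gives a sharper conclusion.
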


\begin{proof}
If $a,b\geq 0$, the result is immediate. Let us first consider  $a,b\in (-2,0)$. We note that $u,v\in W^{2,k}_{loc}({\mathbb R}^N)$ with $1<k\leq N/2$,  due to $|a|, |b|<2$ and elliptic regularity.
By Sobolev imbedding, it follows that 
\begin{align}\label{7a}
|\nabla u|, |\nabla u| \in L^N_{loc}({\mathbb R}^N).
 \end{align}
By the same arguments, (\ref{7a}) still holds if $a\geq0$ or $b\geq 0$.
Consequently,
\begin{align*}
\int\limits_{\rho=0}^\eps\int\limits_{|x|=\rho} \left(|\nabla u|^2+|\nabla v|^2\right) \,d\sigma_\eps\,d\rho
&=\int\limits_{|x|<\eps}\left( |\nabla u|^2+|\nabla v|^2 \right)\,dx\\
&\leq C\eps^{N-2}\left(\|\nabla u\|^2_{L^N(B_\eps)}+\|\nabla u\|^2_{L^N(B_\eps)}\right),
\end{align*}
and assertion (\ref{estimGrad}) follows.

Let now $\varphi\in C^\infty_0(\Omega)$ and denote $\Omega_\eps=\Omega\cap\{|x|>\eps\}$ for $\eps>0$ small.
From (\ref{1}), using Green's formula, we obtain
\begin{align*}
\Bigl|\int\limits_{\Omega_\eps}  \left(|x|^av^p\varphi+ u\Delta\varphi\right)dx\Bigr|
=\Bigl|-\int\limits_{\Omega_\eps} \varphi\Delta u\,dx+\int\limits_{\Omega_\eps} u\Delta\varphi\,dx\Bigr|
=\Bigl|\int\limits_{|x|=\eps} \varphi\,\frac{\partial u}{\partial r} \,d\sigma_\eps-\int\limits_{|x|=\eps} u\,\frac{\partial \varphi}{\partial r} \,d\sigma_\eps\Bigr|.
\end{align*}
Similarly,
\begin{align*}
&\Bigl|\int\limits_{\Omega_\eps} |x|^bu^q\varphi\,dx+\int\limits_{\Omega_\eps} v\Delta\varphi\,dx\Bigr|
=\Bigl|\int\limits_{|x|=\eps} \varphi\,\frac{\partial v}{\partial r} \,d\sigma_\eps-\int\limits_{|x|=\eps} v\,\frac{\partial \varphi}{\partial r} \,d\sigma_\eps\Bigr|.
\end{align*}
Passing to the limit with $\eps=\eps_i$, we conclude that $(u,v)$ is a distributional solution of (\ref{1}).
 \end{proof}
\medskip
{\it Proof of Lemma \ref{pohozaev}.}
Since $u$ is a solution of (\ref{1}) then
\begin{align}\notag
&(x.\nabla u)\Delta v+(x.\nabla v)\Delta u=-(x.\nabla u)|x|^bu^q-(x.\nabla v)|x|^av^p\\
&=-\text{div}\left(x|x|^b\frac{u^{q+1}}{q+1}+x|x|^a\frac{v^{p+1}}{p+1}\right)+\frac{N+b}{q+1}|x|^bu^{q+1}+\frac{N+a}{p+1}|x|^av^{p+1}.\label{a11}
\end{align}
Integrating (\ref{a11})  on $B_R\setminus B_\varepsilon$ and letting $\varepsilon \to 0$, we have
\begin{align}\notag
\int\limits_{B_R}(x.\nabla u)\Delta v+(x.\nabla v)\Delta u\,dx=\int\limits_{B_R}\left(\frac{N+b}{q+1}|x|^bu^{q+1}+\frac{N+a}{p+1}|x|^av^{p+1}\right)dx\\
-\int\limits_{|x|=R}\left(R^{1+b}\frac{u^{q+1}}{q+1}+R^{1+a}\frac{v^{p+1}}{p+1}\right) d\sigma_R.\label{2b}
\end{align}

On the other hand, we have 
\begin{align}\label{2c}
\int\limits_{B_R\setminus B_{\eps}}\nabla u.\nabla v\,dx&=-\int\limits_{B_R\setminus B_{\eps}}u\Delta v \,dx 
+\int\limits_{|x|=R}uv'\,d\sigma_R-\int\limits_{|x|=\eps}uv'\,d\sigma_\eps\notag\\
&=\int\limits_{B_R\setminus B_{\eps}}|x|^bu^{q+1}\,dx+\int\limits_{|x|=R}uv'\,d\sigma_R-\int\limits_{|x|=\eps}uv'\,d\sigma_\eps.
\end{align}
Letting $\eps=\eps_i\to 0$ in (\ref{2c}), where $\eps_i$ is given by Lemma~\ref{lemDistrib}, we obtain
\begin{align*}
\int\limits_{B_R}\nabla u.\nabla v\,dx=\int\limits_{B_R}|x|^bu^{q+1} \,dx +\int\limits_{|x|=R}uv'\,d\sigma_R.
\end{align*}
Similarly,
\begin{align*}
\int\limits_{B_R}\nabla u.\nabla v\,dx=\int\limits_{B_R}|x|^av^{p+1} \,dx +\int\limits_{|x|=R}u'v\,d\sigma_R.
\end{align*}
Hence, for $a_1+a_2=N-2$, we have 
\begin{align}\label{2d}
\int\limits_{B_R}(N-2)\nabla u.\nabla v\,dx=\int\limits_{B_R}\left(a_1|x|^av^{p+1}+a_2|x|^bu^{q+1}\right) \,dx +\int\limits_{|x|=R}(a_1u'v+a_2uv')\,d\sigma_R.
\end{align}
By direct 
computation, we have the following identity
\begin{align}\label{3a}
(x.\nabla u)\Delta v+(x.\nabla v)\Delta u-(N-2)\nabla u.\nabla v=\text{div}\left[(x.\nabla u)\nabla v+(x.\nabla v)\nabla u-x\nabla u.\nabla v\right].
\end{align}
Integrating (\ref{3a}) on $B_R\setminus B_\varepsilon$ and letting $\eps=\eps_i\to 0$, where $\eps_i$ is given by Lemma~\ref{lemDistrib}, we have
\begin{align}\label{3b}
 \int\limits_{B_R}\left[(x.\nabla u)\Delta v+(x.\nabla v)\Delta u-(N-2)\nabla u.\nabla v\right] dx=\int\limits_{|x|=R} R\left(2u'v'-\nabla u.\nabla v\right) d\sigma_R
\end{align}
The Rellich-Pohozaev identity follows from (\ref{2b}),  (\ref{2d}),  and (\ref{3b})
\qed

\medskip
For the proof of Lemma \ref{lem4a}, we need the following lemma (see  \cite[Lemma 3.2]{BC98} and \cite{AS11}).
\begin{lemma}\label{lem6a}
Assume $h\in L^\infty(B_3\setminus B_{1/2})$ is nonnegative, and $u\geq 0$ satisfies
$$-\Delta u\geq h(x)\quad \text{ in } B_3\setminus B_{1/2}.$$
There exists a constant $C=C(N)$ such that
\begin{align}
 \inf_{B_2\setminus B_{1}}u\geq C\int_{B_2\setminus B_1}h(x)dx.
\end{align}
 \end{lemma}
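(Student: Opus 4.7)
The plan is to reduce the statement to a Green's function estimate on the fixed annulus $\Omega := B_3 \setminus \overline{B_{1/2}}$, whose geometry depends only on $N$.

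First, I would introduce the unique solution $w \in W^{2,N}(\Omega)\cap C(\overline\Omega)$ of the Dirichlet problem
$$-\Delta w = h \quad \text{in } \Omega, \qquad w = 0 \quad \text{on } \partial\Omega.$$
Existence and regularity are standard since $h \in L^\infty(\Omega)$ and $\Omega$ is smooth. Because $u\ge 0=w$ on $\partial\Omega$ and $-\Delta(u-w)\ge 0$ in $\Omega$, the weak maximum principle gives $u\ge w$ pointwise in $\Omega$.

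Next, I would represent $w$ by the Dirichlet Green's function $G(x,y)$ of $-\Delta$ on $\Omega$:
$$w(x) \;=\; \int_\Omega G(x,y)\,h(y)\,dy \;\ge\; \int_{B_2\setminus B_1} G(x,y)\,h(y)\,dy, \qquad x\in\Omega.$$
Setting $K := \overline{B_2\setminus B_1}\subset\Omega$, the crux is the claim
$$C_0 \;:=\; \inf_{\substack{(x,y)\in K\times K \\ x\ne y}} G(x,y) \;>\; 0.$$
This holds because $G$ is continuous and strictly positive on $\Omega\times\Omega$ off the diagonal (strong maximum principle), so it is bounded below by a positive constant on any compact subset of $K\times K$ that stays away from $\{x=y\}$; on the other hand, near the diagonal $G(x,y)$ behaves like $c_N|x-y|^{2-N}$, hence blows up to $+\infty$, and in particular stays bounded below. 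Combining these two regimes and noting that $K$ and $\Omega$ depend only on $N$ yields $C_0=C_0(N)>0$.

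Chaining the inequalities gives $u(x)\ge w(x) \ge C_0 \int_{B_2\setminus B_1} h(y)\,dy$ for every $x\in K$, which is the desired bound with $C=C_0$. The main (and essentially only) obstacle is justifying the positivity $C_0>0$, which requires the careful combination of off-diagonal positivity with the near-diagonal singular behavior of $G$; the comparison step $u\ge w$ and the Green's representation are routine.
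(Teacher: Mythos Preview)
Your proposal is correct. The paper does not give its own proof of this lemma; it merely cites \cite[Lemma~3.2]{BC98} and \cite{AS11} as references, so there is no paper-proof to compare against in detail. Your Green's function plus comparison argument is essentially the standard one appearing in those references: solve the Dirichlet problem for $w$ on the annulus, compare $u\ge w$ via the maximum principle (the boundary condition is justified since $u\ge 0$ in $\Omega$ and $w\to 0$ at $\partial\Omega$, so $\liminf_{x\to\partial\Omega}(u-w)\ge 0$), and then use the uniform positive lower bound for the Dirichlet Green's function on the compact $K\times K$ away from the diagonal, combined with its singular behavior near the diagonal. Everything depends only on the fixed geometry of $\Omega$ and hence only on $N$.
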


{\it Proof of Lemma \ref{lem4a}.}
Let $m_1(R)=\inf_{B_{2R}\setminus B_R} u$, $m_2(R)=\inf_{B_{2R}\setminus B_R}v$. 
It follows from Lemma \ref{lem6a} that
 \begin{align}
  m_1(R)\geq CR^{2-N}\int\limits_{B_{2R}\setminus B_R}|x|^av^pdx\geq CR^{2+a}m^p_2(R),\quad R>\rho,\label{si2}\\
m_2(R)\geq CR^{2-N}\int\limits_{B_{2R}\setminus B_R}|x|^bu^qdx\geq CR^{2+b}m^q_1(R), \quad R>\rho.\label{si3}
 \end{align}
Therefore,
 \begin{align}\label{si1}
  m_1(R)\geq C R^{2+a+p(2+b)}m_1 ^{pq}(R),\quad m_2(R)\geq C R^{2+b+q(2+a)}m_2^{pq}(R), \quad R>\rho,
 \end{align}
%
hence
$$m_1(R)\leq CR^{-\alpha-\frac{a+bp}{pq-1}}, \quad m_2(R)\leq CR^{-\beta-\frac{b+aq}{pq-1}}.$$
Combining this with (\ref{si2}) and (\ref{si3}), we have the desired estimates in Lemma \ref{lem4a}.
\qed

\bibliographystyle{plain}

\end{document}